
\documentclass[a4paper,10pt]{amsart}
\usepackage{amssymb}
\usepackage{array}
\usepackage{graphicx}
\usepackage[english]{babel}
\usepackage[utf8]{inputenc}
\usepackage{amsmath}
\usepackage{amsthm}
\usepackage{units}
\usepackage{dsfont}
\usepackage[arrow, matrix, curve]{xy}
\newcommand{\tu}[1]{\textup{#1}}

\newcommand{\Abb}[4]{\left\{ \begin{array}{ccc}
                               #1 & \rightarrow &#2\\
			       #3 &\mapsto &#4
                               \end{array}\right.}
\newcommand{\supp}{\operatorname{supp}}


\newcommand{\N}{\mathbb{N}}
\newcommand{\R}{\mathbb R}

\newcommand{\C}{\mathbb{C}}



\newcommand{\WF}{\operatorname{WF}}



\theoremstyle{plain}
\newtheorem{Satz}{Satz}
\newtheorem{lem}[Satz]{Lemma}
\newtheorem{prop}[Satz]{Proposition}
\newtheorem{thm}[Satz]{Theorem}

\theoremstyle{definition}

\newtheorem{Def}{Definition}

\theoremstyle{remark}
\newtheorem{rem}{Remark}


\title[Support of Pollicott-Ruelle states]{On the support of Pollicott-Ruelle resonant states for Anosov flows}
  
\author{Tobias Weich}
\address{Institut für Mathematik, Universitat Paderborn, Paderborn, Germany}
\email{weich@math.uni-paderborn.de}

\begin{document}
\begin{abstract}
We show that all generalized Pollicott-Ruelle resonant states of a topologically transitiv 
$C^\infty$ Anosov flow with an arbitrary $C^\infty$ potential, have full support.
\end{abstract}
\maketitle
\section{Introduction}
\label{sec:intro}
Let $(M,g)$ be a smooth, compact Riemannian manifold without boundary.
Let $X\in C^\infty(M,TM)$ be a smooth Anosov vector field and let us denote 
by $\varphi_t$ the flow on $M$ generated by $X$ and let $\mathcal V\in C^\infty(M;\C)$
be a smooth potential function. Then we can define the following 
differential operator 
\begin{equation*}
\mathbf P:= \frac{1}{i} X + \mathcal V: C^\infty(M)\to C^\infty(M).
\end{equation*}
It is a well established approach to study the dynamical properties of  
Anosov flows by the discrete spectrum of the operator $\mathbf P$, the so called 
Pollicott-Ruelle resonances. The fact that for volume preserving flows and 
real valued potentials, the operator
$\mathbf P$ is an unbounded, essentially self adjoint operator on $L^2(M)$ 
might suggest, that $\mathbf P$ has good spectral properties on $L^2(M)$.
However, due to the lack of ellipticity $\mathbf P$ has mainly continuous spectrum 
which carries little information on the dynamics of the flow. A very important 
progress was thus to construct Banach spaces  \cite{Liv04,BL07} or Hilbert spaces
\cite{FSj11,arXDyZw13} for Anosov flows in which the operator $\mathbf P$ has discrete 
spectrum in a sufficiently large region (see also \cite{Rug92,  Kit99, BKL02, GL06,BT07}
for analogous results for Anosov diffeomorphism). More precisely, it has been shown
that there is a family of Hilbert spaces $H_{sG}$ parametrized by $s>0$ 
such that for any $C_0>0$  and for sufficiantly large $s$
the operator $\mathbf P$ acting on $H_{sG}$ has discrete spectrum in the region 
$\{\tu{Im}\lambda >-C_0\}$. This discrete 
spectrum is known to be intrinsic to the Anosov flow together with the potential 
function and does not depend on the sufficiantly large parameter $s$ (see 
Section~\ref{sec:micro_ruelle} for a more precise statement).
Accordingly we call $\lambda_0 \in \C$ a \emph{Pollicott-Ruelle resonance} 
if it is an eigenvalue of $\mathbf P$ on $H_{sG}$ for sufficiently large $s$
and we call $\ker_{H_{sG}}(\mathbf P-\lambda_0)$ the space of \emph{Pollicott-Ruelle resonant states}.
As $\mathcal P$ is not anymore a normal operator on $H_{sG}$, there might exist also 
exist finite dimensional Jordan blocks and given a Ruelle resonance $\lambda_0$
we denote by $J(\lambda_0)$ the maximal size of a corresponding Jordan block and
we call $\ker_{H_{sG}}(\mathbf P-\lambda_0)^{J(\lambda_0)}$ the space of \emph{generalized
Pollicott-Ruelle resonant states}.

The interest in these Pollicott-Ruelle resonances and their resonant states arises
from the fact that they govern the decay of correlations. In order to illustrate 
this in more detail, let us for the moment assume that the Anosov flow is contact,
which includes for example all geodesic flows on compact manifolds 
with negative sectional curvature. If we denote by $dw$ the invariant contact 
volume form and take two smooth functions $f,g\in C^\infty(M)$, 
then one is interested in the behavior of the correlation function
\[
 C_{f,g}(t):=\int_M f(m)\varphi_t^*g(m) dw.
\]
In this case the following expansion of the correlation function has been established 
in \cite[Corollary 1.2]{Tsu10} (see also \cite[Corollary 5]{NZ13})
\begin{equation}
 \label{eq:cor_expansion}
 C_{f,g}(t) = \int_M f\,dw~ \int_M g\,dw + \sum_{j=1}^J\sum_{k=1}^{K_j} t^k e^{-it\mu_j} 
 u_{j,k}(f)v_{j,k}(g) +\mathcal O(e^{-\gamma t}).
\end{equation}
Here $\mu_j$ are the Pollicott-Ruelle resonances for a vanisihing potential
($\mathcal V=0$) which are lying in a strip 
$0>\tu{Im}(\mu)>-\gamma$. Furthermore $u_{j,1}, v_{j,1}$ are the corresponding left and 
right generalized eigenstates and the sum over $k$ arises from the possible existence of
Jordan blocks of size $K_j$. As $\tu{Im}(\mu_j)<0$ all terms except the first 
one vanish exponentially in the limit of large times. This effect is interpreted 
as an exponential convergence towards equilibrium, known as exponential 
mixing and has first been established in the seminal works of Dolgopyat 
\cite{Dol98} and Liverani \cite{Liv04}. The way towards equilibrium is governed 
by the sub leading resonances and resonant states. While the resonances determine 
the possible decay modes, the resonant states determine the coefficients. If for example
there would be a simple Pollicott-Ruelle resonance $\mu_j$ with a resonant state, 
that vanishes on an open set $U\subset M$, then for all observables 
$f\in C^\infty_c(U)$ this resonance would not appear in the correlation expansion
(\ref{eq:cor_expansion}). The main theorem of this article states, that
this can not be the case. 
\begin{thm}\label{thm:support}
 Let $\varphi_t$ be a topologically transitive $C^\infty$ Anosov flow. Let $\lambda_0$ be a 
 generalized Pollicott-Ruelle resonance and 
 $u\in\ker_{H_{sG}}(\mathbf P-\lambda_0)^{J(\lambda_0)}\setminus\{0\}$
 then $\supp u = M$.
\end{thm}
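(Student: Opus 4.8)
The plan is to reduce to a genuine resonant state, then to show that the support of such a state is a closed flow-invariant set which is moreover saturated by weak-unstable manifolds, and finally to invoke the classical fact that these leaves are dense.

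First I would reduce to the case $u\in\ker_{H_{sG}}(\mathbf P-\lambda_0)\setminus\{0\}$. If $u$ is a non-zero generalized resonant state, pick the minimal $k\ge 1$ with $(\mathbf P-\lambda_0)^k u=0$ and set $v:=(\mathbf P-\lambda_0)^{k-1}u$; then $v$ is a non-zero genuine resonant state and $\supp v\subseteq\supp u$ because $(\mathbf P-\lambda_0)^{k-1}$ is a differential operator, so $\supp v=M$ forces $\supp u=M$. For a genuine resonant state I would recall from the microlocal construction of the spaces $H_{sG}$ (cf.\ \cite{FSj11,arXDyZw13}, see Section~\ref{sec:micro_ruelle}) that $\WF(u)\subset E_u^*$, where $E_u^*\subset T^*M$ is the annihilator of $E_0\oplus E_u$ and $E_0=\R X$. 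From $\mathbf Pu=\lambda_0u$ one gets the transport equation $Xu=i(\lambda_0-\mathcal V)u$, hence $\varphi_{-t}^*u=h_t\,u$ with $h_t=\exp\big(i\lambda_0 t-i\int_0^t\mathcal V\circ\varphi_{-\tau}\,d\tau\big)$ nowhere vanishing; therefore $K:=\supp u$, and its complement, are non-empty, closed and flow-invariant. The wave front bound moreover lets $u$ be restricted to any $C^\infty$ hypersurface transverse to the weak-unstable cone, and implies that in a flow box $u$ is a $C^\infty$ function of the flow parameter and of the $E_u$-directions with values in distributions transverse to them.

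The core step is to prove that $K$ is a union of weak-unstable leaves, equivalently that $M\setminus K$ is. Let $q\notin K$, fix an open $U\ni q$ on which $u=0$, and let $p\in W^{u}_{\mathrm{loc}}(q)$. For $t>0$, $\varphi_{-t}(U)$ is open and disjoint from $K$; it is a neighbourhood of $\varphi_{-t}(q)$ which contracts exponentially in the $E_u$-directions but stays uniformly large in the complementary $E_0\oplus E_s$-directions, while $d(\varphi_{-t}p,\varphi_{-t}q)\le Ce^{-\lambda t}$ with the displacement asymptotically tangent to $E_u$. Using that $u$ is smooth precisely along the $E_u$-directions---so that the thinness of $\varphi_{-t}(U)$ in those directions is harmless and can be traded against the exponential approach $\varphi_{-t}p\to\varphi_{-t}q$---one concludes that $u$ vanishes on a genuine neighbourhood of $\varphi_{-t}(p)$ for $t$ large; flowing forward by $\varphi_t$ and using flow-invariance of $M\setminus K$, $u$ vanishes near $p$. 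Combined with flow-invariance this shows $M\setminus K$ is saturated by weak-unstable leaves, hence so is $K$.

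To finish I would use that for a topologically transitive $C^\infty$ Anosov flow every weak-unstable manifold is dense in $M$ (a classical consequence of the local product structure together with density of periodic orbits). Since $K=\supp u$ is non-empty, closed and a union of weak-unstable leaves, for any $m\in K$ it contains $\overline{W^{0u}(m)}=M$, whence $\supp u=M$. The step I expect to be the main obstacle is the weak-unstable invariance of $K$: the bound $\WF(u)\subset E_u^*$ must be used quantitatively---for instance through wave front / FBI estimates on a family of test functions adapted to the flow box around $\varphi_{-t}(q)$ and rescaled in the $E_u$-directions---since the naive implication ``$u$ smooth along $E_u$ and $u=0$ on $\varphi_{-t}(U)$ $\Rightarrow$ $u=0$ on the $E_u$-saturation of $\varphi_{-t}(U)$'' fails for general distributions smooth along $E_u$, so that the exponential contraction of $\varphi_{-t}$ on $E_u$ has to be played off carefully against the exponential rate at which $\varphi_{-t}p$ approaches $\varphi_{-t}q$.
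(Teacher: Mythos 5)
Your reduction to a genuine resonant state, the use of $\WF(u)\subset E_u^*$, and the final step (a non-empty closed set saturated by weak-unstable leaves must equal $M$ for a topologically transitive Anosov flow, by minimality of the weak-unstable foliation) are all sound. The genuine gap is the core step: the claim that $M\setminus\supp u$ is saturated by weak-unstable leaves is not established by the argument you sketch, and the mechanism you propose cannot work as stated. If $p\in W^u_{\mathrm{loc}}(q)\setminus U$, then $\varphi_{-t}(p)\notin\varphi_{-t}(U)$ for every $t$, and the two exponential rates you want to ``play off against each other'' are the \emph{same} rate: both the unstable width of $\varphi_{-t}(U)$ and the distance from $\varphi_{-t}(p)$ to $\varphi_{-t}(q)$ are governed by $d\varphi_{-t}|_{E_u}$, so $\varphi_{-t}(p)$ stays outside the thin slab $\varphi_{-t}(U)$ at a distance comparable to the slab's unstable thickness, uniformly in $t$; there is no asymptotic gain to exploit. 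Moreover, even granting some uniform regularity of $u$ along unstable directions, smoothness never propagates vanishing across the boundary of an open set (think of $e^{-1/x}\mathds{1}_{x>0}$, which is smooth, vanishes on a half-line and is positive arbitrarily close to it); and for a general Anosov flow the unstable foliation is only H\"older, so ``$u$ is a smooth function of the $E_u$-directions'' is not a leafwise statement one can quantify at the scales needed. The key lemma of your outline is therefore unproved, and I do not see how to prove it along these lines.

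For contrast, the paper avoids any propagation along unstable leaves and exploits the opposite stretching. Backward images $\varphi_{-t}(U_0)$ of an open set in the complement of $\supp u$ are stretched along the \emph{strong stable} direction, so in each product chart the complement contains complete stable tubes $h_i(A\times Y_i)$; ergodicity of the SRB measure, which is absolutely continuous along weak-unstable leaves, forces these sets $A$ to exhaust a full Lebesgue measure subset $N_i$ of the unstable coordinate. The wavefront condition then enters only through a pairing: the characteristic function of a stable tube has wavefront set conormal to the stable leaves, hence transversal to $E_u^*$, so it can be multiplied with $u$, and the Fourier estimates of Proposition~\ref{prop:foruier_estimate} (which rest on the smoothness of the conditional densities of the stable foliation, Theorem~\ref{thm:anosov_fubini}) give a bound proportional to the unstable-direction volume of the tube's base. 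Testing $u$ against the residual tubes over $N_{i,\varepsilon}^c$, whose base measure tends to zero, yields $u=0$. To salvage your outline you would need to replace the unstable-saturation step by an argument of this measure-theoretic flavour.
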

\begin{rem}
Note that contrary to the correlation expansion (\ref{eq:cor_expansion})
which we mentioned as a motivation, our result for the resonant states 
holds for a general $C^\infty$ Anosov flow with no assumption 
on contact structures, smooth invariant measures, or even mixing properties. 
Taking a suspension of a topological transitive Anosov diffeomorphism, the theorem 
directly implies an analogous result for the resonant states of the Anosov 
diffeomorphism. 
\end{rem}
\begin{rem}
To our knowledge there has up to now little been known about the structure of 
Pollicott-Ruelle eigenstates. However in the proof of \cite[Theorem 5.1]{GL08}
Gou\"ezel and Liverani show an analogouse result for the peripheral spectrum
of topologically mixing hyperbolic maps. In this case, there is however only 
one simple peripheral eigenvalue so the statement applies only to this one resonant state
which corresponds to the equilibrium measure. Furthermore for the particular case 
of a geodesic flows on manifolds of constant negative curvature, Dyatlov, Faure
and Guillarmou \cite{DFG15} proved an explicit relation between Pollicott-Ruelle
resonant states and Laplace eigenstates. Using this relation allows to transfer 
well established support properties of Laplace eigenstates to support properties 
of Pollicott-Ruelle eigenstates.
\end{rem}
The article is organized as follows: In Section~\ref{sec:notation} we introduce
the basic definitions and recall some known facts about Anosov flows and the 
microlocal approach to Pollicott-Ruelle resonances. Section~\ref{sec:support} is
devoted to the proof of the main Theorem~\ref{thm:support}.

\emph{Acknowledgements:}
I would like to thank Heiko Gimperlein for enlightening discussions about the 
microlocal techniques in this proof. Furthermore I am grateful to 
Sebastién Gou\"ezel, Stéphane Nonnenmacher, Viviane Baladi and Gabriel Rivière
for the hints concerning the dynamical systems literature and in particular for the explications
about the smoothness of the conditional measures (c.f. Theorem~\ref{thm:anosov_fubini})
by the first two of them. 
Last but not least, I thank Colin Guillarmou and Joachim Hilgert for stimulating
discussions, from which this work emerged as well as for their detailed feed-back 
on an earlier version of the manuscript. This work
has been supported by the grant DFG HI 412 12-1. 

\section{Notation and Preliminaries}\label{sec:notation}
\subsection{Anosov flows}
Let $(M,g)$ be a smooth, compact Riemannian manifold without boundary and let us 
denote the Riemannian distance between two points $m,\tilde m\in M$ by 
$d(m,\tilde m)$. A smooth vector field $X\in C^\infty(M,TM)$, respectively the flow
$\varphi_t$ on $M$ generated by $X$, are called \emph{Anosov} if there is a 
direct sum decomposition of each tangent space
\[
 T_m M = E_0(m)\oplus E_s(m)\oplus E_u(m)
\]
which depends continuously on the base point $m$ and which is invariant under
 $d\varphi_t$. Furthermore the decomposition has to fulfill $E_0(m) =\R X(m)$ 
and there has to exist some fixed $C$ and $\theta>0$ with
\begin{equation}
 \begin{split}
  \|d\varphi_t(m)v\|_{T_{\varphi_t(m)} M} \leq C e^{-\theta|t|}\|v\|_{T_{m} M}
  &~~\forall v\in E_u(m),~~t<0\\
  \|d\varphi_t(m)v\|_{T_{\varphi_t(m)} M} \leq C e^{-\theta|t|}\|v\|_{T_{m} M} &~~ \forall v\in E_s(m),~~t>0.
 \end{split}
\end{equation}
An Anosov flow is said to be \emph{topological transitive} if there exists a 
dense orbit. 

It is known, that the bundles $E_0\oplus E_s$ and $E_0\oplus E_u$ are uniquely integrable
\cite{Ano67, HPS70} and given a point $m\in M$ we denote by $W^{wu}(m)$ and $W^{ws}(m)$ 
the integral manifolds through the point $m$ which are smooth 
immersed submanifolds of $M$. They are called \emph{weak stable}
and \emph{weak unstable} manifolds through $m$. Also the bundles $E_s$ and $E_u$ are uniquely
integrable. The corresponding integral manifolds will be denoted by $W^s(m), W^u(m)$
and will be called \emph{strong stable} and \emph{strong unstable} manifolds. 
For further reference let us denote by $n_{ws},n_{wu},n_s,n_u\in \N$ the dimension 
of the manifolds $W^{ws}(m), W^{wu}(m), W^s(m)$ and $W^u(m)$, respectively. Let us
recall the following well established result on the continuous dependence of the invariant 
manifolds on the base point.
\begin{thm}[c.f.~{\cite[Theorem 3.2]{HP70}}]\label{thm:cont_depend_inv_mflds}
 $W^{ws}$ is a family of $n_{ws}$-dimensional immersed $C^\infty$-submanifolds that 
 depends continuously on the base point w.r.t. the $C^\infty$-topology. More precisely,
 this means that for any $m$ there is a neighborhood $m\in U\subset M$ as well as a
 continuous map $g: U \to C^\infty(D^{n_{ws}},M)$ such that for any $\tilde m\in U,$
 $g(\tilde m)$ is a $C^\infty$-diffeomorphism of the $n_{ws}$ dimensional 
 disk $D^{n_{ws}}$ onto a neighborhood of $\tilde m$ in $W^{ws}(\tilde m)$.
 
 The analogous result holds for $W^{wu}, W^s, W^u$. 
\end{thm}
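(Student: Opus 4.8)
The plan is to deduce this from the Hadamard--Perron graph transform together with the fiber contraction theorem of Hirsch and Pugh \cite{HP70, HPS70}, in three steps.

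\emph{Step 1 (reduction to the strong stable manifolds).} I would first reduce all four assertions to the single statement that the family $W^s$ of strong stable manifolds depends continuously on the base point in the $C^\infty$-topology. Since an Anosov flow has no fixed points, $X(m)\in E_0(m)$ is never tangent to $E_s(m)$; as $E_0\oplus E_s$ is uniquely integrable with leaves $W^{ws}$ of dimension $n_s+1=n_{ws}$, for any $C^\infty$-parametrization $\sigma_m\colon D^{n_s}\to W^s(m)$ with $\sigma_m(0)=m$ the map
\[
 (t,y)\longmapsto\varphi_t\big(\sigma_m(y)\big),\qquad (t,y)\in(-\varepsilon,\varepsilon)\times D^{n_s},
\]
takes values in $W^{ws}(m)$ and has injective differential at the origin — it sends $\partial_t$ to $X(m)$ and the coordinate fields of $D^{n_s}$ to a basis of $E_s(m)$, and these span $T_mW^{ws}(m)$ — so for $\varepsilon$ and the disk small it is a $C^\infty$-diffeomorphism onto a neighbourhood of $m$ in $W^{ws}(m)$. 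Identifying $(-\varepsilon,\varepsilon)\times D^{n_s}$ with $D^{n_{ws}}$ and using that $\varphi$ is a $C^\infty$ flow, a continuous family $m\mapsto\sigma_m\in C^\infty(D^{n_s},M)$ then produces a continuous family $m\mapsto g(m)\in C^\infty(D^{n_{ws}},M)$ with the properties claimed for $W^{ws}$. Replacing $X$ by $-X$ interchanges the stable and unstable data, so $W^u$ and $W^{wu}$ follow from $W^s$ and $W^{ws}$.

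\emph{Step 2 (graph transform for $W^s$).} I would work in the $C^\infty$-metric $\hat g$ equal to $g$ on $E_s\oplus E_u$ and to $\|X\|_g^{-2}\,g$ on $E_0=\R X$, so that $d\varphi_t$ acts isometrically on $E_0$ while the Anosov estimates persist as $\|d\varphi_t|_{E_s}\|_{\hat g}\le C'e^{-\theta t}$. Over $M$ one forms the continuous bundle whose fibre over $m$ consists of small $C^1$-disks through $m$, represented via the exponential map of $\hat g$ as graphs of uniformly Lipschitz maps from an $E_s(m)$-disk into $F(m):=(E_0\oplus E_u)(m)$ and tangent to $E_s(m)$ at $m$. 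For $T$ large the Hadamard--Perron graph transform $\Gamma$, built from $d\varphi_{\pm T}$ along orbit segments, is a uniform fiber contraction over the base dynamics, all of its ingredients depending continuously on $m$; its unique invariant section $m\mapsto\sigma_m$ parametrizes $W^s_{\mathrm{loc}}(m)$, and by the fiber contraction theorem it is continuous into $C^0$. Differentiating the fixed-point equation $k$ times exhibits the $k$-jet $(\sigma_m,D\sigma_m,\dots,D^k\sigma_m)$ as the attracting invariant section of a further continuous family of fiber contractions whose contraction constants are, in $\hat g$, positive powers of $\|d\varphi_T|_{E_s}\|_{\hat g}<1$ times trivial $E_0$-factors, hence $<1$ uniformly in $m$ for $T$ large. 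Thus $\sigma_m\in C^\infty$ and $m\mapsto\sigma_m$ is continuous into $C^k$ for every $k$; since the $C^\infty$-topology is the projective limit of the $C^k$-topologies, $m\mapsto\sigma_m\in C^\infty(D^{n_s},M)$ is continuous, and with Step 1 this proves the theorem.

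\emph{Where the difficulty lies.} The one genuinely non-formal ingredient is the graph-transform/fiber-contraction machinery of \cite{HP70}: the continuous dependence of a leaf on its base point in the $C^\infty$-topology is precisely the continuous dependence on parameters built into that method, and it is uniform over $M$ because the Anosov constants are. The point particular to flows that must be handled with care is the neutral direction $E_0$: over the full complement $F$ the naive graph transform is merely neutral, not contracting, along $E_0$; passing to the adapted metric $\hat g$ (equivalently, to Poincaré sections of the flow) removes this, after which every bunching inequality needed for $C^\infty$-smoothness of the leaves reduces to an inequality involving only positive powers of $\|d\varphi_T|_{E_s}\|_{\hat g}\le C'e^{-\theta T}<1$ and so holds once $T$ is large. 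I expect the main technical burden — routine, but lengthy — to be the bookkeeping that keeps the bundle of disks, the transform $\Gamma$, and its higher-jet analogues manifestly continuous in the base point, using the continuity of the splitting $E_0\oplus E_s\oplus E_u$.
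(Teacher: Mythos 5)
The paper does not prove this theorem at all: it is quoted directly from Hirsch--Pugh \cite{HP70} (Theorem 3.2), so there is no in-paper argument to compare against. Your sketch is a correct reconstruction of the standard proof behind that citation --- reduction of $W^{ws},W^{wu},W^{u}$ to $W^{s}$ by flowing and time reversal, then the Hadamard--Perron graph transform with the $C^{0}$ fiber contraction theorem applied to the jet bundles, with the decisive observation that in an adapted metric the conorm of $d\varphi_{T}$ on $E_{0}\oplus E_{u}$ is bounded below near $1$, so every $C^{r}$-bunching condition $\lambda<\mu^{r}$ holds for all $r$ --- and the remaining work you defer is indeed the routine bookkeeping carried out in \cite{HP70,HPS70}.
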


\begin{rem}
 The stable and unstable foliations are known to be even Hölder continuous
 in general, and under certain assumptions such as curvature pinching even regularity
 $C^1$ or better can be obtained (c.f. \cite[Section 2.3]{Has02}). We will, 
 however, not explicitly need these refined regularity estimates in the
 sequel.
\end{rem}

Given two points $y,z\in W^{ws}(m)$ we denote by $d^{ws}(y,z)$ the metric distance on 
$W^{ws}(m)$ coming from the metric inherited from the Riemannian metric on $M$. 
In the same way we can define $d^{wu}, d^s$ and $d^u$.
We can now define the following different open balls of radius $r>0$.
\begin{align*}
B_r(m)&:=\{\tilde m\in M: d(m,\tilde m)<r \}\\
B_r^{wu}(m)&:=\{\tilde m\in W^{wu}(m): d^{wu}(m,\tilde m)<r \}\\
B_r^{ws}(m)&:=\{\tilde m\in W^{ws}(m): d^{ws}(m,\tilde m)<r \}\\
B_r^u(m)&:=\{\tilde m\in W^u(m): d^{u}(m,\tilde m)<r \}\\
B_r^s(m)&:=\{\tilde m\in W^s(m): d^{s}(m,\tilde m)<r \}.
\end{align*}
\begin{thm}[Product Neighborhood Theorem]\label{thm:product_neighbour}
There exists $\delta_0 >0$ independent of $m\in M$ such that for any $m\in M$
and $\delta\leq\delta_0$ the following maps 
\begin{align*}
H^{ws,u}_m:&\Abb{B^{ws}_\delta(m)\times B^u_\delta(m)}{M}{(x,y)}{B^u_{2\delta}(x)\cap B^{ws}_{2\delta}(y)}\\
H^{wu,s}_m:&\Abb{B^{wu}_\delta(m)\times B^s_\delta(m)}{M}{(x,y)}{B^s_{2\delta}(x)\cap B^{wu}_{2\delta}(y)}
\end{align*}
are unambiguously defined, injective and homeomorphisms onto their images.
\end{thm}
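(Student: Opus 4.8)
The plan is to deduce this local product structure from the transversality of the weak-stable and strong-unstable foliations, turning everything uniform in the base point by means of the continuous dependence statement (Theorem~\ref{thm:cont_depend_inv_mflds}) together with compactness of $M$. Since reversing the flow direction $t\mapsto -t$ preserves the Anosov property and interchanges $W^{ws}\leftrightarrow W^{wu}$ as well as $W^s\leftrightarrow W^u$, it suffices to treat $H^{ws,u}_m$; the statement for $H^{wu,s}_m$ then follows by applying the result to $\varphi_{-t}$.

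First I would record the pointwise transversality. Because $W^{ws}(m)$ is the integral manifold of $E_0\oplus E_s$ and $W^u(m)$ that of $E_u$, one has $T_mW^{ws}(m)=E_0(m)\oplus E_s(m)$ and $T_mW^u(m)=E_u(m)$, whence $T_mW^{ws}(m)\oplus T_mW^u(m)=T_mM$ with $n_{ws}+n_u=n$. Since the Anosov splitting is continuous and, by Theorem~\ref{thm:cont_depend_inv_mflds}, the leaves $W^u(x),W^{ws}(y)$ vary continuously in the $C^\infty$-topology with $x,y$, the tangent spaces $T_qW^u(x)$ and $T_qW^{ws}(y)$ stay transverse, with an angle bounded below, for $x,y$ in a neighbourhood of $m$ and $q$ near $m$; compactness of $M$ then furnishes a single such angle bound valid at every base point.

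Next I would produce the intersection point. Fixing $x\in B^{ws}_\delta(m)$ and $y\in B^u_\delta(m)$, the submanifolds $W^u(x)$ and $W^{ws}(y)$ are $C^\infty$, transverse, and $C^\infty$-close to $W^u(m)$, $W^{ws}(m)$ respectively; applying the implicit function theorem to this fixed pair of leaves (so that only the leaf parameters from Theorem~\ref{thm:cont_depend_inv_mflds} are differentiated, the base points being held fixed) yields a unique intersection point $p$ in a small ambient ball around $m$, depending continuously on $(x,y)$ and equal to $m$ when $x=y=m$. A routine quantitative estimate---using that the intrinsic metrics $d^u,d^{ws}$ are uniformly comparable to $d$ on small scales and that the relevant holonomy maps have uniformly bounded distortion---then gives $d^u(x,p)<2\delta$ and $d^{ws}(y,p)<2\delta$ once $\delta\le\delta_0$ for $\delta_0$ small enough; intersecting with the balls of radius $2\delta$ discards any further, far-away intersection points along the leaves, so $B^u_{2\delta}(x)\cap B^{ws}_{2\delta}(y)=\{p\}$ and $H^{ws,u}_m$ is unambiguously defined on $B^{ws}_\delta(m)\times B^u_\delta(m)$.

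For injectivity and the homeomorphism property I would argue as follows. By unique integrability, strong-unstable (resp.\ weak-stable) leaves are either equal or disjoint as subsets of $M$, so $p=H^{ws,u}_m(x,y)$ forces $W^u(p)=W^u(x)$ and $W^{ws}(p)=W^{ws}(y)$; consequently $x$ is the unique point of $W^u(p)\cap W^{ws}(m)$ lying in a small ball about $m$ and $y$ the unique point of $W^{ws}(p)\cap W^u(m)$ there. This recovers $(x,y)$ from $p$, so $H^{ws,u}_m$ is injective and its inverse on the image is continuous by Theorem~\ref{thm:cont_depend_inv_mflds}; hence $H^{ws,u}_m$ is a homeomorphism onto its image. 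Finally, since all of the above estimates hold uniformly for base points ranging over a whole neighbourhood of $m$, a finite subcover of $M$ produces one $\delta_0>0$ independent of $m$. I expect the main obstacle to be the low regularity of the foliations: as they are a priori merely continuous, one cannot apply the implicit function theorem to a map that is jointly smooth in the base point and the leaf variable, and the argument must instead run leaf by leaf with transversality bounds that are uniform in the base point---precisely what Theorem~\ref{thm:cont_depend_inv_mflds} combined with compactness of $M$ delivers. Verifying the exact radius $2\delta$ is then a routine computation.
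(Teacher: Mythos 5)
The paper itself does not prove this theorem: it is quoted from the literature with pointers to \cite{PS70} and \cite{Pla72}, so there is no in-paper proof to compare against. Your argument is the standard local-product-structure proof (transversality of the smooth individual leaves of complementary dimensions $n_{ws}+n_u=n$, continuity of the leaves in the $C^1$-topology from Theorem~\ref{thm:cont_depend_inv_mflds}, and compactness of $M$ to make the angle bound and the size of the parametrized leaf neighbourhoods uniform in the base point), and it is sound; in particular you correctly avoid the trap of applying the implicit function theorem in the base-point direction, where the foliations are only H\"older. The one step worth making explicit is the bound $d^u(x,p)<2\delta$: it needs no regularity of the stable holonomy beyond continuity, since in graph coordinates over $E_0(m)\oplus E_s(m)\oplus E_u(m)$ the local leaves $W^u(x)$ and $W^{ws}(y)$ have graph derivatives tending to $0$ uniformly as $\delta\to 0$, so the intersection point satisfies $d^u(x,p)\le (1+o(1))\,d^u(m,y)<2\delta$; one should also note $d\le d^u$, which keeps $B^u_{2\delta}(x)$ inside the single local sheet of the immersed leaf where the uniqueness argument applies.
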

For a proof see \cite{PS70} (see also \cite{Pla72} for a more detailed statement). 
Given a point $m\in M$ and $0<\alpha,\beta<\delta_0$ we define the open product 
neighborhoods of $m$
\[
 \mathcal{PN}^{ws,u}_{\alpha,\beta}(m):= H^{ws,u}_m(B^{ws}_\alpha(m)\times B^u_\beta(m)) \subset M
 \]
and
\[
\mathcal{PN}^{wu,s}_{\alpha,\beta}(m):= H^{wu,s}_m(B^{wu}_\alpha(m)\times B^s_\beta(m))\subset M.
\]
We will call such product neighborhoods also rectangular neighborhoods. These
product neighborhoods have the important property that the defining homeomorphisms
imply foliations by local invariant manifolds. In the sequel we will only need the
foliations of $(wu,s)$-rectangles which are given by
\[
 \mathcal{PN}^{wu,s}_{\alpha,\beta}(m) = \bigcup_{x\in B^{wu}_\alpha(m)} \mathcal S_x = \bigcup_{y\in B^{s}_\beta(m)} \mathcal U_y
\]
where $\mathcal S_x := H^{wu,s}_m(\{x\}\times B^s_\beta(m))$ are the local strong stable 
leaves and $\mathcal U_y := H^{wu,s}_m(B^{wu}_\alpha(m)\times \{y\})$ the local weak unstable leaves.
Note that from Theorem~\ref{thm:cont_depend_inv_mflds}, as well the strong stable 
leaves $\mathcal S_x$ as the weak unstable leaves $\mathcal U_y$ are smooth 
submanifolds of the product neighborhood. Still the foliations are no smooth foliations
as the trivialization map $H^{wu,s}_m$ is only continuous, which reflects the fact 
from Theorem~\ref{thm:cont_depend_inv_mflds} that the local invariant manifolds 
$\mathcal S_x$ ($\mathcal U_y$ respectively), depend only continuously on their 
basepoints $x$ ($y$ respectively). As $M$ is a Riemannian manifold, the leaves 
$\mathcal S_x$ ($\mathcal U_y$ respectively) inherit a Riemannian metric and thus also a 
Lebesgue measure which we denote by $dm_{\mathcal S_x}$ ($dm_{\mathcal U_y}$ respectively).

One way to introduce Sinai-Ruelle-Bowen (SRB) measures is to demand, that they 
are absolute continuous w.r.t. the weak unstable foliation.
\begin{Def}[SRB measure]
 A measure $\mu$ on $M$ which is invariant under the Anosov flow $\varphi_{t}$
 is called a \emph{SRB-measure} if for any product neighborhood $\mathcal{PN}^{wu,s}_{\alpha,\beta}(m)$
 and for all $y\in B^s_\beta(m)$ there are positive, measureable functions $\tau_y$
 on $\mathcal U_y$ as well as a measure $d\sigma$ on $B^s_\beta(m)$ such that the 
 restriction of $\mu$ to $\mathcal{PN}^{wu,s}_{\alpha,\beta}(m)$ is given by
 \[
\mu_{|\mathcal{PN}^{wu,s}_{\alpha,\beta}(m)} = \int_{B^s_\beta(m)} (\tau_y dm_{\mathcal U_y}) d\sigma(y).
 \]
 More precisely this equality means, that for any $f\in C_c(\mathcal{PN}^{wu,s}_{\alpha,\beta}(m))$
 \[
  \int f d\mu = \int_{B^s_\beta(m)}\left(\int_{\mathcal U_y} f_{|\mathcal U_y}(z) \tau_y(z) dm_{\mathcal U_y}(z)\right)d\sigma(y).
 \]
\end{Def}
\begin{rem}
 There are other equivalent possibilities to define SRB measure in terms of 
 metric entropy and ergodic averages, see e.g. \cite{You02} for an overview. 
\end{rem}
\begin{thm}
For any Anosov flow there exists a SRB measure. If the Anosov 
flow is topological transitive, the SRB measure is unique and the Anosov flow is 
ergodic w.r.t. the SRB measure.
\end{thm}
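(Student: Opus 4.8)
The statement is classical (Sinai, Ruelle, Bowen), and the plan is to prove the three assertions --- existence, uniqueness, ergodicity --- in turn, exploiting the local product structure of Theorem~\ref{thm:product_neighbour}.

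\emph{Existence.} I would fix $m\in M$, take $m^u$ to be the normalised Riemannian volume on the strong unstable disk $B^u_{\delta_0}(m)$, and form the time averages
\[
 \mu_T:=\frac1T\int_0^T(\varphi_t)_* m^u\,dt,
\]
Borel probability measures on $M$. By weak-$*$ compactness some sequence $T_n\to\infty$ gives $\mu_{T_n}\rightharpoonup\mu$, and a Krylov--Bogolyubov argument makes $\mu$ invariant under $\varphi_t$. The key point is the bounded distortion estimate for the unstable Jacobian $J^u_t(x):=|\det(d\varphi_t|_{E_u(x)})|$: since $x\mapsto E_u(x)$ is Hölder continuous and $d\varphi_{-s}$ contracts $E_u$ exponentially, there is $C_0$, independent of $t$, with
\[
 \bigl|\log J^u_t(p)-\log J^u_t(q)\bigr|\le C_0
\]
whenever $p,q$ lie on a common strong unstable leaf and $d^u(\varphi_t p,\varphi_t q)$ stays bounded. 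Hence the densities of $(\varphi_t)_* m^u$ with respect to the Riemannian volume on the image unstable leaves are bounded above and below, uniformly in $t$, on compact pieces; this survives the time average and the weak-$*$ limit, and disintegrating $\mu$ over the weak unstable foliation of a product neighbourhood $\mathcal{PN}^{wu,s}_{\alpha,\beta}(m)=\bigcup_y\mathcal U_y$ gives conditional measures $\tau_y\,dm_{\mathcal U_y}$ with $\tau_y>0$; thus $\mu$ is an SRB measure.

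\emph{Uniqueness and ergodicity.} Here I would invoke two classical facts about hyperbolic sets. First, inside every product neighbourhood the holonomy along the strong stable leaves $\mathcal S_x$ between two weak unstable leaves is absolutely continuous, and the weak unstable foliation itself is absolutely continuous, so that Riemannian volume on $\mathcal{PN}^{wu,s}_{\alpha,\beta}(m)$ disintegrates as $\int(\rho_y\,dm_{\mathcal U_y})\,d\hat\sigma(y)$ with $\rho_y>0$ (Anosov). Second, under topological transitivity every unstable manifold is dense, which forces $\supp\mu=M$ for any SRB measure $\mu$: the set $\supp\mu$ is closed and $\varphi_t$-invariant, and positivity of the $\tau_y$ makes it contain a local weak unstable leaf through each of its points, hence a dense unstable manifold. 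Now take $\mu$ \emph{ergodic} and let $B(\mu):=\{x:\ \frac1T\int_0^T\delta_{\varphi_t x}\,dt\rightharpoonup\mu\}$ be its basin; by Birkhoff's theorem $\mu(B(\mu))=1$, and $B(\mu)$ is saturated by strong stable leaves because orbits on such a leaf converge forward, so forward time averages of continuous functions agree. In a fixed product neighbourhood, since $\supp\mu=M$ and $\tau_y>0$ there is $y_0$ with $B(\mu)\cap\mathcal U_{y_0}$ of full $dm_{\mathcal U_{y_0}}$-measure; transporting this set by the (absolutely continuous, $B(\mu)$-preserving) strong stable holonomy $\mathcal U_{y_0}\to\mathcal U_y$ makes $B(\mu)\cap\mathcal U_y$ full for every $y$, and then absolute continuity of the unstable foliation makes $\mathcal{PN}^{wu,s}_{\alpha,\beta}(m)\setminus B(\mu)$ Lebesgue-null. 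Covering $M$ by finitely many such neighbourhoods, $B(\mu)$ is co-null; since basins of distinct ergodic measures are disjoint, there is at most one ergodic SRB measure, and with the existence part exactly one, $\mu_0$. To finish I would check that every ergodic component of an arbitrary SRB measure is again an SRB measure: forward and backward time averages are $\varphi_t$-invariant and constant along strong stable, resp. strong unstable, leaves, hence along weak unstable leaves, and they agree almost everywhere, so $\mu$-a.e. leaf $\mathcal U_y$ lies in a single ergodic component and the ergodic decomposition refines the unstable disintegration, each piece keeping positive unstable densities. Consequently every SRB measure has all ergodic components equal to $\mu_0$, hence equals $\mu_0$ and is ergodic.

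\emph{Main obstacle.} The hard input is the absolute continuity of the stable and unstable foliations (equivalently, of the holonomy maps), which for general $C^\infty$ Anosov flows is Anosov's theorem; the bounded distortion estimate is of the same flavour but softer. An alternative avoiding the Hopf argument would code the flow, via a Markov partition, as a suspension of a topologically transitive subshift of finite type, realise the SRB measure as the unique equilibrium state of the Hölder potential $-\frac{d}{dt}\big|_{t=0}\log J^u_t$, and deduce uniqueness and ergodicity from uniqueness of Gibbs states for Hölder potentials over transitive subshifts of finite type.
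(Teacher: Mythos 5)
Your proof is correct in outline, but it takes a genuinely different route from the paper, which in fact offers no proof of this theorem at all: it simply refers to Bowen \cite{Bow70, Bow73}, i.e.\ to the symbolic-dynamics approach (Markov partitions, coding of the flow as a suspension over a transitive subshift of finite type, identification of the SRB measure with the unique equilibrium state of the unstable Jacobian potential) --- which is precisely the ``alternative'' you relegate to your last sentence. What you actually write out is the Sinai--Anosov--Pesin route: existence by pushing forward leaf volume on an unstable disk, time-averaging, and bounded distortion; uniqueness and ergodicity by the Hopf argument through absolute continuity of the invariant foliations. Both are classical; your route is arguably closer in spirit to the rest of the paper, which in any case relies on absolute continuity of the strong stable holonomies and on disintegration of volume over the foliations (Theorem~\ref{thm:anosov_fubini}, Lemma~\ref{lem:neighbour_prod_measure}), while the cited symbolic approach requires constructing Markov partitions but yields finer conclusions (Gibbs property, mixing/Bernoulli statements, equilibrium-state characterisations).

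Two points in your sketch deserve care. First, the assertion that ``under topological transitivity every unstable manifold is dense'' is false for \emph{strong} unstable manifolds: the suspension of a transitive Anosov diffeomorphism with constant roof function is a transitive Anosov flow whose strong unstable leaves each lie in a single fibre. What is true, and what your next clause actually uses, is density of the \emph{weak} unstable leaves; since the SRB conditionals live on the weak unstable leaves $\mathcal U_y$, the argument for $\supp\mu=M$ does go through, but the distinction should be made explicit. Second, in the existence part, the claim that absolute continuity of the unstable conditionals ``survives the weak-$*$ limit'' is the crux of the Pesin--Sinai construction: one must verify that the set of measures whose conditionals in a fixed product neighbourhood have densities pinched between $1/C$ and $C$ is weak-$*$ closed. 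With the uniform distortion bound this is standard, but it is the step where the proof actually lives and should not be passed over silently.
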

For an original proof of this result in the context of Axiom A diffeomorphism
see \cite{Bow70} and for generalizations to flows \cite{Bow73}.

\subsection{Microlocal approach to Pollicott-Ruelle resonances for Anosov flows}
\label{sec:micro_ruelle}
Let us collect some facts about the spectral theory of Pollicott-Ruelle resonances.
Suitable Banach spaces, in which the differential operator $\mathbf P$, 
has discrete spectrum, have first been introduced by 
Liverani \cite{Liv04}. In this article we will use the microlocal approach to 
transfer operators, which has been introduced by Faure, Roy and Sjöstrand in a 
series of papers \cite{FR06, FRS08, FSj11}. The following result for Anosov flows 
has originally been shown by Faure and Sjöstrand \cite[Theorem 1.4]{FSj11}. 
The reader might also be interested in \cite[Proposition 3.2]{arXDyZw13} where 
an alternative proof, using different microlocal techniques, is given. While these
two publications only mention the case of zero potential a more general 
statement including arbitrary smooth potentials can be found in \cite{arXDG14}.
\begin{thm}\label{thm:fredholm_property}
For any $C_0>0$ there is an  $s>0$ and Hilbert spaces $H_{sG}$ and $D_{sG}$  
such that in the region $\{\tu{Im}\lambda >-C_0\}$
the operator $\mathbf P-\lambda:D_{sG}\to H_{sG}$ is Fredholm of index 0.

Here the Hilbert space $H_{sG}$ is an anisotropic Sobolev space 
 that fulfills the relations
\[
C^\infty(M)\subset H^s(M)\subset H_{sG}\subset H^{-s}(M)\subset \mathcal D'(M),
\]
where $H^s(M)$ denotes the ordinary Sobolev space. The Hilbert spaces $D_{sG}$ 
can be defined by considering the operator $\mathbf P$ acting on 
$\mathcal D'(M)$ and we set 
\[
D_{sG}:=\{u\in H_{sG}: \mathbf P u\in H_{sG}\}
\]
which is a Hilbert space with the norm $\|u\|_{D_{sG}} := \|u\|_{H_{sG}} + \|\mathbf P u\|_{H_{sG}}$
\end{thm}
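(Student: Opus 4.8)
The plan is to follow the microlocal approach of Faure, Roy and Sjöstrand \cite{FSj11} (an alternative, working directly with variable-order pseudodifferential operators, is in \cite{arXDyZw13}): make explicit the escape-function structure that the Anosov dynamics induces on the cotangent bundle, use it to define the anisotropic spaces $H_{sG}$, and then read off the Fredholm property of $\mathbf P-\lambda$ from elliptic, propagation and radial-point estimates. The starting point is that the principal symbol of $\mathbf P$ is the real, homogeneous-degree-one function $p(x,\xi):=\xi(X(x))$, with characteristic set $\Sigma:=p^{-1}(0)\setminus 0=(E_0)^{\perp}\setminus 0$, on whose complement in $T^*M\setminus 0$ the operator $\mathbf P-\lambda$ is automatically elliptic. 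The Hamilton flow $e^{tH_p}$ covers $\varphi_t$ and acts on the fibres by the inverse transpose of $d\varphi_t$, and the Anosov splitting tells us that the conic sets $E_u^{*}:=(E_0\oplus E_u)^{\perp}$ and $E_s^{*}:=(E_0\oplus E_s)^{\perp}$ are closed, $e^{tH_p}$-invariant and contained in $\Sigma$; that $E_u^{*}$ is a radial source and $E_s^{*}$ a radial sink for the flow restricted to $\Sigma$, every other half-trajectory in $\Sigma$ converging to $E_s^{*}$ forward and to $E_u^{*}$ backward with no other recurrence; and that $\langle\xi\rangle$ contracts (resp. expands) at asymptotic rate at least $\theta$ along $e^{tH_p}$ near $E_u^{*}$ (resp. near $E_s^{*}$).

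Next I would construct the escape function. Fix $m\in C^{\infty}(T^*M)$, homogeneous of degree $0$ for $\langle\xi\rangle\geq 1$, equal to $-1$ on a conic neighbourhood of $E_u^{*}$, to $+1$ on a conic neighbourhood of $E_s^{*}$, and non-decreasing along $e^{tH_p}$, and set $G:=m\log\langle\xi\rangle$. Then $G$ is a symbol of logarithmic order with $|G|\leq\log\langle\xi\rangle$, so that $A_s:=\operatorname{Op}(e^{sG})$ is a pseudodifferential operator of order $s$ — in an exotic Hörmander class $S^{s}_{\rho,0}$ with $\rho<1$ — that is elliptic and invertible modulo smoothing; one defines $H_{sG}:=A_s^{-1}L^2(M)$ with the Hilbert norm $\|u\|_{H_{sG}}:=\|A_su\|_{L^2}$, so that $H_{sG}$ has microlocal Sobolev order $-s$ at $E_u^{*}$ and $+s$ at $E_s^{*}$. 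Since $A_s^{\pm1}$ has order at most $s$, the inclusions $C^{\infty}(M)\subset H^s(M)\subset H_{sG}\subset H^{-s}(M)\subset\mathcal D'(M)$ are immediate; $D_{sG}$ is defined as in the statement, and it is a Hilbert space with the graph norm because $\mathbf P$, being a differential operator, is closed on $\mathcal D'(M)$.

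Transferring $\mathbf P-\lambda:D_{sG}\to H_{sG}$ through $A_s$ gives $\widetilde{\mathbf P}-\lambda:=A_s(\mathbf P-\lambda)A_s^{-1}$ on $L^2(M)$, which has principal symbol $p$ since $A_s\mathbf PA_s^{-1}=\mathbf P-is\operatorname{Op}(H_pG)$ modulo operators of lower order. I would estimate this operator microlocally, region by region: by ellipticity of $p$ away from $\Sigma$ and at bounded frequency; by propagation of singularities along the bicharacteristics in $\Sigma\setminus(E_u^{*}\cup E_s^{*})$, which are non-trapped and run from $E_u^{*}$ to $E_s^{*}$ (here the monotonicity of $m$ along the flow is what makes the variable-order propagation estimate work); by the radial source estimate at $E_u^{*}$, available because the Sobolev order $-s$ carried there by $H_{sG}$ lies below the relevant threshold; and by the radial sink estimate at $E_s^{*}$, available because the order $+s$ lies above its threshold. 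The two thresholds depend on $\Im\lambda$ and on the rate $\theta$, and because $\Im\lambda>-C_0$ they are met uniformly for all such $\lambda$ once $s=s(C_0)$ is taken sufficiently large. Patching the microlocal estimates yields
$$\|u\|_{D_{sG}}\leq C\,\|(\mathbf P-\lambda)u\|_{H_{sG}}+C\,\|u\|_{H^{-N}(M)},\qquad u\in D_{sG},$$
with $N$ fixed and $H_{sG}\hookrightarrow H^{-N}(M)$ compact, so $\mathbf P-\lambda$ has finite-dimensional kernel and closed range; the formal adjoint $\mathbf P^{*}-\bar\lambda$ is an operator of the same type with $E_s^{*}$ and $E_u^{*}$ exchanged and natural space $H_{-sG}$, so the same four-region argument bounds its kernel and hence the cokernel of $\mathbf P-\lambda$. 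Thus $\mathbf P-\lambda$ is Fredholm on the connected region $\{\Im\lambda>-C_0\}$, where the index is therefore constant; and for $\Im\lambda$ large enough the imaginary part of the full symbol of $\widetilde{\mathbf P}-\lambda$ is negative and bounded away from zero and $p$ handles the remaining large frequencies, so $\widetilde{\mathbf P}-\lambda$ is globally elliptic and a global (sharp) Gårding estimate with no compact remainder makes it, and hence $\mathbf P-\lambda$, invertible there — index $0$, hence index $0$ throughout $\{\Im\lambda>-C_0\}$.

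The main obstacle is the a priori estimate of the third step, and within it the radial-point estimates at $E_u^{*}$ and $E_s^{*}$: one must verify that the Sobolev orders $\mp s$ carried there by $H_{sG}$ sit on the correct side of the $\Im\lambda$-dependent Melrose thresholds, simultaneously for every $\Im\lambda>-C_0$, and it is exactly this requirement that forces $s$ to grow with $C_0$ and constitutes the quantitative heart of the statement. The dynamical input for these estimates — that $E_u^{*},E_s^{*}$ are genuine radial source and sink with a uniform asymptotic rate $\theta$, and that $\Sigma$ carries no other recurrence — is precisely what the Anosov hypothesis on $\varphi_t$ supplies, via the continuous invariance of the (co)bundles $E_0,E_s,E_u$.
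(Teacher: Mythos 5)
The paper does not prove this theorem itself: it quotes it from Faure--Sj\"ostrand \cite{FSj11} (alternatively \cite{arXDyZw13}, and \cite{arXDG14} for general potentials), and your proposal follows exactly that microlocal route. The definition of $H_{sG}$ via an escape function, the values of the order function ($-s$ near $E_u^*$, $+s$ near $E_s^*$), the sandwich $H^s\subset H_{sG}\subset H^{-s}$, the four-region a priori estimate plus compactness of $H_{sG}\hookrightarrow H^{-N}$, the adjoint argument for the cokernel, and the index computation via invertibility for $\operatorname{Im}\lambda\gg 1$ are all the standard and correct skeleton.

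There is, however, one concrete error that would make the argument fail as written: your description of the lifted dynamics on $T^*M$ is time-reversed. With the convention $E_u^*=(E_0\oplus E_u)^{\perp}$ used here, a covector $\xi\in E_u^*$ pairs only with $E_s$, and since $d\varphi_{-t}$ \emph{expands} stable vectors for $t>0$, one gets $|e^{tH_p}(x,\xi)|\gtrsim e^{\theta t}|\xi|$ for $t>0$ on $E_u^*$: covectors near $E_u^*$ \emph{expand} forward in time, not contract, and the trajectories of $e^{tH_p}$ inside the characteristic set run from $E_s^*$ (backward limit set) to $E_u^*$ (forward limit set) --- which is precisely why resonant states, produced by the $t\to+\infty$ evolution, have wavefront set in $E_u^*$. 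Consequently an order function equal to $+1$ near $E_s^*$ and $-1$ near $E_u^*$ must be non-\emph{increasing} along $e^{tH_p}$; the non-decreasing function you postulate, with those boundary values and the true flow direction, cannot exist. This is not mere terminology: the sign $H_pG\le 0$ (Faure--Sj\"ostrand, Lemma~1.2) is what makes the subprincipal term $is\operatorname{Op}(H_pG)$ of $A_s\mathbf P A_s^{-1}$ act as a damping of size $\asymp s$ that absorbs $-\operatorname{Im}\lambda\le C_0$; with $H_pG\ge 0$ the G\r{a}rding and positive-commutator estimates close in the region $\{\operatorname{Im}\lambda<C_0\}$ instead of $\{\operatorname{Im}\lambda>-C_0\}$. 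The same reversal propagates into your third step: the ``free'' above-threshold radial estimate holds at $E_s^*$ (order $+s$) and feeds the forward propagation, which terminates in the below-threshold estimate at $E_u^*$ (order $-s$); your pairing of thresholds with $E_u^*$ and $E_s^*$ is right, but the direction of propagation and the identification of which set needs no a priori input are swapped. Correcting the flow direction (and hence the monotonicity of $G$) repairs the proof without changing anything else.
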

Since $(\mathbf P -\lambda)$ is invertible for $\tu{Re}(\lambda)$ large enough
(see e.g. \cite[Lemma 3.3]{FSj11} or \cite[Prop 3.1]{arXDyZw13}), analytic Fredholm theory 
implies that the resolvent
\[
\mathbf R(\lambda):=(\mathbf P-\lambda)^ {-1}:H_{sG} \to H_{sG}
\]
has a meromorphic continuation to $\{\tu{Im}\lambda >-C_0\}$ and the 
\emph{Pollicott-Ruelle resonances}
are defined to be the poles of the meromorphic continuation. Given a Pollicott-Ruelle
resonance $\lambda_0$ the space $\ker_{H_{sG}}(\mathbf P-\lambda_0)\setminus \{0\}$ is
nonempty and its elements are called the corresponding 
\emph{Pollicott-Ruelle resonant states}. Furthermore there is an integer 
$J(\lambda_0)\geq 1$ determining the maximal size of a Jordan block with spectral
value $\lambda_0$, i.e. we have for all $k\geq J(\lambda_0)$ 
$\ker_{H_{sG}}(\mathbf P-\lambda_0)^k = \ker_{H_{sG}}(\mathbf P-\lambda_0)^{J(\lambda_0)}$ and we 
call the space $\ker_{H_{sG}}(\mathbf P-\lambda_0)^{J(\lambda_0)}$ the space of 
\emph{generalized Pollicott-Ruelle resonant states}. Note that the
fact that $C^\infty(M)$ is densely contained in all $H_{sG}$ together with the uniqueness
of meromorphic continuation imply that neither the position of the resonance
$\lambda_0$ nor the spaces $\ker_{H_sG}(\mathbf P-\lambda_0)$ 
and $\ker_{H_sG}(\mathbf P-\lambda_0)^{J(\lambda_0)}$ depend on the 
parameter $s$, so they are intrinsic objects of the Anosov flow (c.f. 
\cite[Theorem 1.5]{FSj11}). 

A central ingredient in the proof of Theorem~\ref{thm:fredholm_property} 
and the construction of the anisotropic Sobolev spaces is to 
study the symplectic lift of the flow action to $T^*M$. Faure and Sjöstrand 
therefore introduce the decomposition
\[
 T^*_mM = E_0^*(m)\oplus E_s^*(m)\oplus E_u^*(m)
\]
which is defined such that 
\begin{align*}
 E^*_0(m)(E_s(m)\oplus E_u(m))&=0,\\
 E^*_u(m)(E_0(m)\oplus E_u(m))&=0,\\
 E^*_s(m)(E_0(m)\oplus E_s(m))&=0.
\end{align*}
Note that if one identifies $T_mM$ with $T^*_mM$ via the Riemannian metric this
definition seems counterintuitive, as $E_s^*(m)$ is identified with $E_u(m)$ and 
vice versa. The reason for this choice in \cite{FSj11} is that it is natural 
from the point of view of the symplectic lift of $\varphi_t$ to $T^*M$ (c.f. 
\cite[eq. (1.13)]{FSj11}). We will not need further details on the construction
of the anisotropic Sobolev spaces and refer to the excellent expositions in 
\cite{FSj11, arXDyZw13}. However, we will crucially use the following 
consequence of these constructions on the microlocal regularity of the 
Pollicott-Ruelle resonant states \cite[Theorem 1.7]{FSj11}
\begin{equation}
 \label{eq:wavefront_cond}
 \ker_{H_{sG}}(\mathbf P-\lambda_0) \subset \mathcal D'_{E^*_u}(M),
\end{equation}
where $D'_{\Gamma}(M)$ denotes the space of distributions on $M$ whose wavefront sets
are contained in some closed cone $\Gamma\subset T^*M$ (see \cite[Lemma 8.2.1]{HoeI}
for further details).

\section{On the support of Pollicott-Ruelle eigenstates}\label{sec:support}
In this section, we will prove Theorem~\ref{thm:support}. Let us start
by giving a brief outline of the proof: As a first step, we will introduce suitable 
charts that are adapted to the foliation of $M$ into stable and unstable manifolds.
Then we study some particular distributions, denoted by $\rho_A$, which are defined
in these charts and which correspond to characteristic functions of tubes 
along the strong stable foliation. Proposition \ref{prop:foruier_estimate} then 
gives estimates on the decay of the Fourier transformation of $\rho_A$. As a consequence,
one deduces that the wavefront set of the distributions $\rho_A$ are bounded away from 
$E_u^*$, thus they can be paired (or multiplied) with Pollicott-Ruelle resonant
states. Using the Fourier estimates in Proposition~\ref{prop:foruier_estimate}
we then show, that any Pollicott-Ruelle resonant state that does not 
have full support, has to be identically zero. Therefore we only use two
properties of the resonant states: the flow invariance
of their support and their microlocal regularity. A major technical difficulty in
the proof comes from the fact that the stable and unstable foliations for general 
Anosov flows are only Hölder regular.

We call a finite open cover $M=\bigcup_{i=1}^N R_i$ a cover of product neighborhoods
if there are $m_i\in M$, $r_i^{wu}, r_i^s > 0$ such that 
$R_i = \mathcal{PN}_{r_i^{wu},r_i^s}^{wu,s}(m_i)$ (c.f. Theorem~\ref{thm:product_neighbour}).
In order to simplify the notation 
 we write $X_i:=B_{r_i^{wu}}^{wu}(m_i)$, $Y_i:=B_{r_i^s}^s(m_i)$ and denote 
 the homeomorphism of Theorem~\ref{thm:product_neighbour} by
 $h_i:= H_{m_i}^{wu,s}:X_i\times Y_i\to R_i$.
recall that $h_i(X_i\times\{m_i\}) =B_{r_i^{wu}}^{wu}(m_i)\subset R_i$ is a smooth 
submanifold. 

Now let us construct a smooth atlas of $M$ by choosing $C^\infty$-charts 
$\kappa_i: R_i \to V_i \subset \R^n=\R^{n_{wu}}\times \R^{n_s}$ for our open 
cover of product neighborhoods such that
\begin{equation}
 \label{eq:chart_properties}
 \begin{array}{rcl}
 \kappa_i(m_i)&=&0\in \R^n, \\
 \kappa_i(B_{r_i^{wu}}^{wu}(m_i)) &=& (\R^{n_{wu}}\times\{0\})\cap V_i,\\
 \kappa_i(B_{r_i^{s}}^{s}(m_i)) &= &(\{0\}\times \R^{n_{s}})\cap V_i.
 \end{array}
\end{equation}
\begin{figure}
\centering
        \includegraphics[width=1\textwidth]{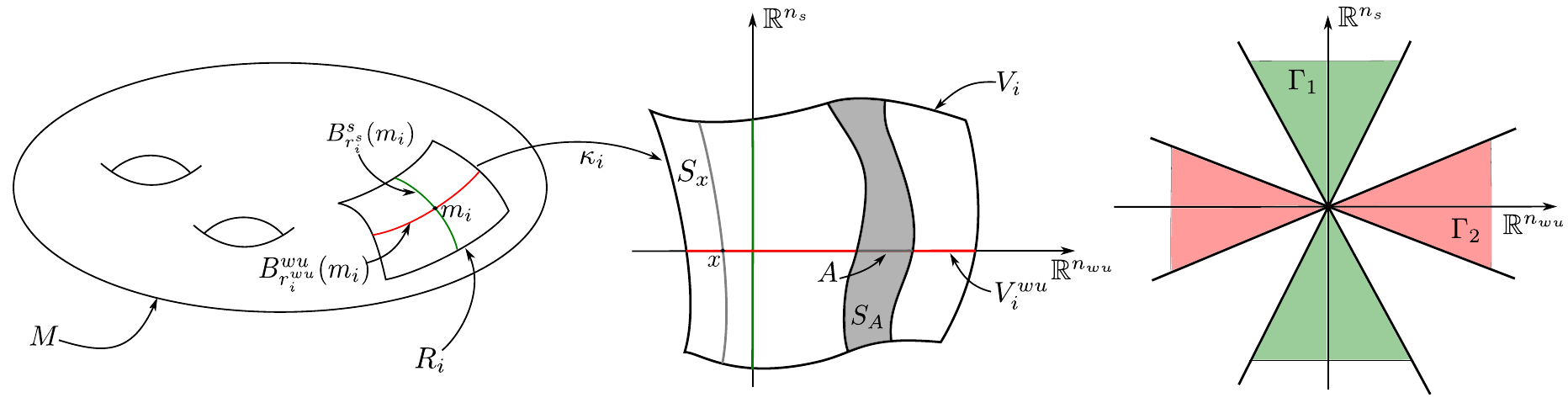}
\caption{Sketch visualizing the properties of the charts $\kappa_i$}
\label{fig:kappa}
\end{figure}
We can then define the open set $V_i^{wu}:=\{x\in \R^{n_{wu}}: (x,0)\in V_i\}$ and
directly see that 
\[
h_i^{wu}: x\in X_i\mapsto \kappa_i\circ h_i(x,m_i) \in V_i^{wu}\times\{0\}\cong V_i^{wu}
\]
defines a homeomorphism between $X_i$ and $V_i^{wu}$. Now for any Borel set 
$A \subset V_i^{wu}$ we define
\[
S_A:= \kappa_i\circ h_i((h_i^{wu})^{-1}(A)\times Y_i) \subset V_i
\]
which is again a Borel subset. Note that for a point $x\in V_i^{wu}$ the
sets $S_x:=S_{\{x\}}$ are exactly the image of the local strong stable manifolds 
under the coordinate chart $\kappa_i$
\[
  S_x = \kappa_i(B_{r_i^s}^s((h^{wu}_i)^{-1}(x))).
\]
According to Theorem~\ref{thm:cont_depend_inv_mflds} the 
$S_x\subset V_i$ are $C^\infty$-submanifolds depending continuously
on the basepoint $x$ w.r.t.~the $C^\infty$ topology. Furthermore, the definition of $S_A$ 
allows us to associate to any Borel set $A\subset V_i^{wu}$ the 
distribution $\rho_A \in \mathcal D'(V_i)$ by setting for $\psi\in C_c^\infty(V_i)$
\begin{equation}\label{eq:rho_A}
 \rho_A[\psi]:= \int_{V_i}\mathds{1}_{S_A}(z)\psi(z) dz.
\end{equation}
Given a chart $\kappa_i$ we write $T^*V_i=V_i\times \R^n$ and we define $\Gamma_1,
\Gamma_2\subset \R^n$ to be the minimal closed cones such that
\begin{equation}
 \label{eq:gamma_1_properties}
 \forall~ m\in R_i,\eta\in E^*_u(m):~~(\kappa_i^{-1})^*(\eta) \in V_i\times \Gamma_1\subset T^*V_i 
\end{equation}
and
\begin{equation}
 \label{eq:gamma_2_properties}
 \forall~ x \in V_i^{wu}, (s,\xi)\in N^*S_x\subset V_i\times \R^n:~~ \xi\in \Gamma_2. 
\end{equation}
Note that for $\eta\in E^*_u(m_i)$ one has $(\kappa_i^{-1})^*(\eta)\in \{0\}\times(\{0\}\times \R^{n_s})$
and that on the other side $N^*S_0 = \{0\}\times(\R^{n_{wu}}\times\{0\})$ are complementary
cones. Thus from the continuity of the foliations and after a possible refinement 
of the charts we can assume that
\begin{equation}\label{eq:un_stable_cones}
\Gamma_1\cap \Gamma_2 =\{0\}
 \end{equation}

We now want to study the Fourier transform of $\rho_A$:

\begin{prop}\label{prop:foruier_estimate}
 Let $\chi\in C_c^\infty(V_i)$ be an arbitrary cutoff function, then there is a
 constant $C>0$ such that for any Borel set $A\subset V_i^{wu}$, 
 \begin{equation}\label{eq:rho_FT_decay1}
  |\widehat{\chi\cdot\rho_A}(\xi)|\leq C \tu{vol}_{n_{wu}}(A)
 \end{equation}
uniformly in $\xi\in \R^n$. Here $\tu{vol}_{n_{wu}}(A)$ is the
$n_{wu}$-dimensional euclidean volume of $A\subset V_i^{wu}\subset \R^{n_{wu}}$.

Furthermore, fix a closed cone $\Omega\subset \R^n$ such that 
$\Omega\cap \Gamma_2 = \{0\}$. Then for all $N\in \N$ there is a
$C_N>0$ such that for all Borel sets $A\subset V_i^{wu}$
\begin{equation}
 \label{eq:rho_FT_decay2}
 |\widehat{\chi\cdot\rho_A}(\xi)|\leq C_N \tu{vol}_{n_{wu}}(A)\langle \xi\rangle^{-N}
\end{equation}
uniformly for $\xi\in \Omega$. 
\end{prop}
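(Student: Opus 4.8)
The plan is to disintegrate the distribution $\chi\cdot\rho_A$ along the foliation $\{S_x\}_{x\in V_i^{wu}}$ and reduce the Fourier estimate to a uniform estimate on the Fourier transform of a single ``slab'' $\mathds 1_{S_x}$, integrated over the parameter $x\in A$. Concretely, using the homeomorphism $h_i^{wu}$ together with the change of variables furnished by $\kappa_i\circ h_i$, I would write $\rho_A[\psi]=\int_A \big(\int_{S_x}\psi\, J_x\big)\,dx$ for a suitable continuous positive Jacobian weight $J_x$ coming from the (continuous in $x$, $C^\infty$ in the leaf variable) parametrization of $S_x$; more precisely, since $S_A$ is the union of the leaves $S_x$ for $x\in A$ and these leaves foliate $V_i$, Fubini gives $\rho_A=\int_A \mu_x\, dx$ where $\mu_x$ is the measure on the $n_s$-dimensional submanifold $S_x$ obtained by pushing forward Lebesgue measure on $Y_i$ (equivalently the smooth density induced by $\kappa_i$ on the leaf). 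The point is that the leaves $S_x$ are uniformly $C^\infty$ graphs over the $\R^{n_s}$-directions with derivatives bounded uniformly in $x$ by Theorem~\ref{thm:cont_depend_inv_mflds} (after refining the charts as in \eqref{eq:chart_properties}), so all constants below are uniform in $x$.

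For the first bound \eqref{eq:rho_FT_decay1}: $\widehat{\chi\cdot\rho_A}(\xi)=\int_A \widehat{\chi\mu_x}(\xi)\,dx$, and $|\widehat{\chi\mu_x}(\xi)|\le \int_{S_x}|\chi|\,d\mu_x \le \|\chi\|_{L^\infty}\cdot \mu_x(\supp\chi)$, which is bounded by a constant independent of $x$ and $\xi$ because each leaf, intersected with the compact set $\supp\chi$, carries bounded mass (again uniformly by the uniform $C^\infty$ control of the leaves). Integrating over $A$ produces the factor $\tu{vol}_{n_{wu}}(A)$, giving \eqref{eq:rho_FT_decay1}.

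For the second, oscillatory, bound \eqref{eq:rho_FT_decay2}: I would fix $\xi\in\Omega$ and apply non-stationary phase / repeated integration by parts on each leaf integral $\widehat{\chi\mu_x}(\xi)=\int \chi\, e^{-i z\cdot\xi}\,d\mu_x(z)$. Parametrize $S_x$ by $t\in Y_i\subset\R^{n_s}\mapsto (g_x(t),t)$ (after reordering coordinates so the leaf is a graph over the $\R^{n_s}$-block); then the phase is $t\mapsto g_x(t)\cdot\xi' + t\cdot\xi''$ with gradient $(\nabla g_x(t))^T\xi' + \xi''$. The conormal directions to $S_x$ are spanned by $(\mathrm{Id},-(\nabla g_x)^T)$-type vectors, and by definition \eqref{eq:gamma_2_properties} all such conormals lie in $\Gamma_2$; since $\Omega\cap\Gamma_2=\{0\}$ and $\Gamma_2$ is closed, there is $c>0$ with $|(\nabla g_x(t))^T\xi'+\xi''|\ge c|\xi|$ for all $x,t$ and all $\xi\in\Omega$ — this is the lower bound on the phase gradient that makes integration by parts work, and getting it \emph{uniformly} in $x$ (using compactness of $\Omega\cap\mathbb S^{n-1}$ and the uniform bound on $\nabla g_x$) is the main obstacle, since the foliation is only Hölder in $x$ so one cannot differentiate in $x$; fortunately only the leaf-direction derivatives of $g_x$ enter the integration by parts, and those are uniformly bounded. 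Each integration by parts then gains a factor $\langle\xi\rangle^{-1}$ at the cost of derivatives of $\chi$ and of $g_x$ in the $t$-variable (all uniformly bounded), so after $N$ steps one obtains $|\widehat{\chi\mu_x}(\xi)|\le C_N\langle\xi\rangle^{-N}$ uniformly in $x\in V_i^{wu}$ and $\xi\in\Omega$; integrating over $A$ yields \eqref{eq:rho_FT_decay2}. One small technical point to handle is that $\chi$ has compact support in $V_i$ but the leaves may exit $V_i$, so one cuts off the leaf parametrization by $\chi$ itself, which is legitimate since $\chi\mu_x$ is a smooth compactly supported density on the leaf.
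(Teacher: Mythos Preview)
Your overall strategy---disintegrate $\chi\rho_A$ along the leaves $S_x$ and then run non-stationary phase on each leaf integral---is exactly what the paper does. The leaf-wise integration-by-parts step is also correct as you describe it: only $t$-derivatives of $g_x$ and of $\chi$ appear, and those are uniformly controlled by Theorem~\ref{thm:cont_depend_inv_mflds}.

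The gap is in the disintegration itself. You write that ``Fubini gives $\rho_A=\int_A \mu_x\,dx$ where $\mu_x$ is the measure on $S_x$ obtained by pushing forward Lebesgue measure on $Y_i$'', and you identify the weight $J_x$ as coming from the smooth parametrization of the leaf. This is not right. The map $(x,t)\mapsto (g_x(t),t)$ is only H\"older in $x$, so there is no classical Jacobian; what actually exists is the conditional density $\delta_x$ of Lebesgue measure on $V_i$ with respect to the foliation $\{S_x\}$, and its mere existence already requires the absolute continuity of the strong stable foliation (equivalently, absolute continuity of the holonomy maps $x\mapsto g_x(t)$ for fixed $t$). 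More to the point, your integration-by-parts step needs every $t$-derivative of the leaf density, uniformly in $x$---but the density you must differentiate is $\delta_x$, not the innocuous graph Jacobian $\sqrt{\det(I+(\nabla g_x)^T\nabla g_x)}$. Concretely, after ordinary Fubini in $(z',z'')$ and a change of variables along each horizontal slice $\{z''=t\}$ via the holonomy $x\mapsto g_x(t)$, the inner integral becomes
\[
\int \chi(g_x(t),t)\,e^{-i(g_x(t)\cdot\xi'+t\cdot\xi'')}\,J^{\mathrm{hol}}_t(x)\,dt,
\]
where $J^{\mathrm{hol}}_t(x)$ is the Radon--Nikodym derivative of the holonomy. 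For \eqref{eq:rho_FT_decay2} you must differentiate $t\mapsto J^{\mathrm{hol}}_t(x)$ arbitrarily many times with bounds uniform in $x$. That this is possible is a genuinely nontrivial fact about Anosov flows; the paper isolates it as Theorem~\ref{thm:anosov_fubini} and traces it (in the appendix) to the $C^\infty$ regularity of holonomy Jacobians along strong stable leaves proved in \cite[Appendix~E]{GLP13}. Your proposal does not supply or cite this ingredient, and without it the repeated integration by parts cannot be carried out.
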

\begin{proof}
In order to prove this theorem we use the definition of the Fourier transform 
for compactly supported distributions and get 
\begin{equation}
 \label{eq:FT_integral}
 |\widehat{\chi\cdot\rho_A}(\xi)| = |\rho_A[\chi(z)e^{-iz\xi}]| =\left| \int_{V_i} \mathds{1}_{S_A}(z) \chi(z)e^{-iz\xi}dz\right|.
\end{equation}
In order to obtain the estimates for this integral let us recall the following 
fact which is a consequence of the absolute continuity of the strong 
stable foliation.
\begin{thm}\label{thm:anosov_fubini}
For any $V_i\subset \R^n$ defined as above and any $x\in V_i^{wu}$ there is a 
smooth function $\delta_x \in C^\infty(S_x)$ such that for any 
$\psi\in C^\infty_c(V_i)$ 
\begin{equation}
 \label{eq:anosov_fubini}
 \int_{V_i} \psi(x,y) dxdy = \int_{V_i^{wu}} \left(\int_{S_x}\psi_{|S_x}(s)\cdot \delta_x(s) dm_{S_x}(s)\right) dx.
\end{equation}
Here $dx, dy$ are the euclidean measures on $\R^{n_{wu}},\R^{n_s}$, 
$\psi_{|S_x}\in C^\infty(S_x)$ is the restriction to the smooth submanifold $S_x$
and $dm_{S_x}$ the induced measure on the submanifold $S_x$.

The functions $\delta_x$ are called \emph{conditional densities} of the strong 
stable foliation. Furthermore, their  dependence on $x\in V_i^{wu}$ is continuous
in the following sense: If $T^1_s(S_x)$ is the set of normalized tangent vectors
 in $s\in S_x$, then for any $k\in \N$ the following $k$-norm 
 \[
  \|\delta_x\|_k := \sup_{s\in S_x}\sup_{X_1,\ldots,X_k\in T^1_s(S_x)}
  \left|X_1\dots X_k\delta_x(s)\right|
 \]
is finite and for all $k\in \N$ the map $x\to\|\delta_x\|_k$ is continuous.
\end{thm}
\begin{proof}
The existence of the conditional measures can be derived by a standard 
argument from the absolute continuity of the strong stable foliation 
(see Appendix~\ref{sec:appendix} respectively standard dynamical systems literature,
e.g. \cite[Section 6.2]{BS02}). In order to obtain the smoothness of the 
conditional measures as well as the continuous dependence on the base point 
$x$ one additionally needs the smoothness of the Jacobians of holonomy maps along
the strong stable leaves. This statement can for example be found in \cite[Appendix E]{GLP13}.
\end{proof}

First we use the Fubini formula for the strong stable foliation (\ref{eq:anosov_fubini}) 
in order to estimate the integral in (\ref{eq:FT_integral}) we obtain
\[
 |\widehat{\chi\cdot\rho_A}(\xi)| = \left|\int_{A} \left(\int_{S_x} \chi(s)e^{-is\xi} \delta_x(s)dm_{S_x}(s)\right)dx\right|.
\]
Now the uniform bounds on $\|\delta_x\|_0$ directly imply (\ref{eq:rho_FT_decay1}). 

In order to see (\ref{eq:rho_FT_decay2}) we use the boundedness of $\|\delta_x\|_k$ together
with the following partial integration argument: For $\xi_0 \in \Omega$ with 
$|\xi_0| =1$ and $x\in V_i^{wu}$ we define the tangent vector field
\[
 \mathcal X_{\xi_0,x}:\Abb{S_x}{TS_x\subset \R^n}{s}{\tu{Pr}_{T_sS_x}(\xi_0).} 
\]
Here $\tu{Pr}_{T_sS_x}$ is the orthogonal projection onto $T_sS_x$ which makes sense
after having identified $TV_i = V_i\times \R^n$. 
From the definition of $S_x$ and $\Gamma_2$ we conclude that for
any $x\in V_i^{wu}$ and $s\in S_x$ we have have 
$(s,\xi_0) \notin N^*S_x$. Thus there is $C_{\xi_0} > 0$ such that 
\[
\langle \mathcal X_{\xi_0,x}(s), \xi_0\rangle \geq C_{\xi_0}
\]
uniformly in $s\in S_x$ and $x\in V_i^{wu}$. By continuity we find an open 
neighborhood $W_{\xi_0}\subset \R^n$ of $\xi_0$ such that 
\[
\langle \mathcal X_{\xi_0,x}(s), \xi\rangle \geq \frac{1}{2} C_{\xi_0}
\]
uniformly in $\xi\in W_{\xi_0}$, $s\in S_x$ and $x\in V_i^{wu}$. 

Now we use the closedness of $\Omega$ and a compactness argument to conclude that
there are $\xi_1,\ldots,\xi_K$ as well as a constant $C_0 > 0$ such that 
for any $\xi \in \Omega$ there is $1\leq l\leq K$, such that
\begin{equation}
 \langle \mathcal X_{\xi_l,x}(s),\xi\rangle \geq C_0|\xi|
\end{equation}
uniformly in $x\in V_i^{wu}$, $s\in S_x$. Furthermore 
Theorem~\ref{thm:cont_depend_inv_mflds} implies that 
$s\mapsto \langle \mathcal X_{\xi_l,x}(s),\xi\rangle \in C^\infty(S_x)$. Even 
if this function is not compactly supported, the fact, that the $S_x$ are precompact,
and that these functions can always be smoothly extended to a slightly
larger stable leaf implies that for all $k\in \N$ the norm 
$\| \langle \mathcal X_{\xi_l,x}(s),\xi\rangle\|_k$ is finite and depends
continuously on $x\in V_i^{wu}$.

Now consider 
\[
 \left|\int_{S_x} \chi(s)e^{-is\xi}\delta_x(s) dm_{S_x}(s)\right|.
\]
Performing $N$ times partial integration w.r.t. the differential operator 
\[
 L_l:= \frac{1}{-i\langle \mathcal X_{\xi_l,x}(s),\xi\rangle}\mathcal X_{\xi_l,x}
\]
we get
\begin{eqnarray*}
 &&\left|\int_{S_x} \chi(s)e^{-is\xi}\delta_x(s) dm_{S_x}(s)\right| \\
 &\leq&  \int_{S_x} \left| 
 \left(\mathcal X_{\xi_l,x}\frac{1}{-i\langle \mathcal X_{\xi_l,x}(s),\xi\rangle}\right)^N \delta_x(s) \chi(s)\right|dm_{S_x}(s)\\
 &\leq & C_N \langle \xi \rangle^{-N},
\end{eqnarray*}
where $C_N$ is independent of $x\in V_i^{wu}$. Note that for this independence we 
crucially use that  $x\mapsto \|\delta_x\|_k$ as well as 
$x\mapsto\|\langle \mathcal X_{\xi_l,x}(s),\xi\rangle\|_k$ are continuous.
\end{proof}
Recall that we can choose $\Omega$ to be an arbitrary cone with $\Omega\cap\Gamma_2 = \{0\}$.
Thus (\ref{eq:rho_FT_decay2}) implies that for any Borel set $A\subset V_i^{wu}$ we have
$\rho_A \in \mathcal D'_{\Gamma_2}(V_i)$, i.e.~the wavefront set of $\rho_A$ is 
contained in $\Gamma_2$. Multiplication of $\rho_A$ with an arbitrary 
compactly supported distribution $v\in \mathcal E'_{\Gamma_1}(V_i)$ is therefore well 
defined and yields a compactly supported distribution $v\cdot\rho_A\in \mathcal E'(V_i)$.
\begin{lem}\label{lem:rho_A_prod_estimate}
For each distribution $v\in \mathcal E'_{\Gamma_1}(V_i)$ there is 
a constant $C_v>0$ such that for all Borel sets $A\subset V_i^{wu}$ we have 
\begin{equation}\label{eq:rho_A_prod_estimate}
 \left|(v\cdot \rho_A)[1] \right| \leq C_v\tu{vol}_{n_{wu}}(A).
\end{equation}
\end{lem}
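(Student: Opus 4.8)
The plan is to express $(v\cdot\rho_A)[1]$ as an absolutely convergent Fourier integral and then to split this integral according to the two decay regimes furnished by Proposition~\ref{prop:foruier_estimate}.

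First I would reduce to a compactly supported situation. Since $v\in\mathcal E'_{\Gamma_1}(V_i)$ is compactly supported in $V_i$, fix once and for all a cutoff $\chi\in C_c^\infty(V_i)$ with $\chi\equiv 1$ on a neighborhood of $\supp v$, so that $v\cdot\rho_A = v\cdot(\chi\rho_A)$, the right hand side being a compactly supported distribution on which evaluation at $1$ is meaningful. Recall that $v\cdot\rho_A$ is the Hörmander product of distributions: it is well defined precisely because $\WF(v)\subset\Gamma_1$ and $\WF(\rho_A)\subset\Gamma_2$ with $\Gamma_1\cap\Gamma_2 = \{0\}$ by (\ref{eq:un_stable_cones}). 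Consequently it is recovered from the convolution of Fourier transforms,
\[
 (v\cdot\rho_A)[1] = \frac{1}{(2\pi)^n}\int_{\R^n}\hat v(\xi)\,\widehat{\chi\cdot\rho_A}(-\xi)\,d\xi ,
\]
the integral being absolutely convergent once the estimates below are in place. Here $\hat v$ is a smooth function with $|\hat v(\xi)|\leq C\langle\xi\rangle^{M}$ for some $M\in\N$ by Paley--Wiener--Schwartz, and, crucially, the wavefront hypothesis $\WF(v)\subset\Gamma_1$ together with the compact support of $v$ forces $\hat v$ to decay rapidly outside every conic neighborhood of $\Gamma_1$.

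Next I would pick a closed cone $\Omega$ which is a conic neighborhood of $\Gamma_1$ (i.e. $\Gamma_1$ minus the origin lies in the interior of $\Omega$) and still satisfies $\Omega\cap\Gamma_2 = \{0\}$; this is possible by (\ref{eq:un_stable_cones}) since $\Gamma_1\cap S^{n-1}$ and $\Gamma_2\cap S^{n-1}$ are disjoint compacta, so a small sphere-neighborhood of the former misses the latter. I then split the integral as $\int_\Omega + \int_{\R^n\setminus\Omega}$. On $\Omega$ I apply estimate (\ref{eq:rho_FT_decay2}) (legitimate because $\Omega\cap\Gamma_2 = \{0\}$): choosing $N > M+n$, the integrand there is bounded by $C\,C_N\,\tu{vol}_{n_{wu}}(A)\,\langle\xi\rangle^{M-N}$, which is integrable and yields a contribution at most $C'\,\tu{vol}_{n_{wu}}(A)$. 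On $\R^n\setminus\Omega$, whose closure is a closed cone meeting $\Gamma_1$ only at the origin, $\hat v$ decays rapidly, say $|\hat v(\xi)|\leq C'_N\langle\xi\rangle^{-N}$, while $\widehat{\chi\cdot\rho_A}$ is controlled by the uniform bound (\ref{eq:rho_FT_decay1}); taking $N > n$ gives a contribution at most $C''\,\tu{vol}_{n_{wu}}(A)$. Adding the two pieces produces (\ref{eq:rho_A_prod_estimate}) with $C_v := (2\pi)^{-n}(C'+C'')$, a constant depending on $v$ through $M$, the growth and decay constants of $\hat v$, and the constants $C, C_N$ of Proposition~\ref{prop:foruier_estimate}.

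The one step that deserves care, rather than being routine bookkeeping, is the identity writing $(v\cdot\rho_A)[1]$ as the convolution integral above: one must invoke the standard fact (see the proof of \cite[Theorem 8.2.10]{HoeI}) that for two compactly supported distributions with microlocally transverse wavefront sets the product defined by regularization coincides with the inverse Fourier transform of the convolution $\hat v * \widehat{\chi\cdot\rho_A}$, and that the relevant integral converges absolutely exactly under the decay just exhibited. Granting this, the remainder is the elementary splitting argument, and the main obstacle is purely in making the Fourier-integral formula and its absolute convergence precise in the present only-Hölder-regular foliation setting, which is exactly what Proposition~\ref{prop:foruier_estimate} has been tailored to supply.
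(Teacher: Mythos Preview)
Your proposal is correct and follows essentially the same route as the paper: introduce a cutoff $\chi$, write $(v\cdot\rho_A)[1]$ as a Fourier convolution integral, choose a closed cone $\Omega$ containing $\Gamma_1\setminus\{0\}$ in its interior with $\Omega\cap\Gamma_2=\{0\}$, and split the integral using Paley--Wiener growth plus the rapid-decay estimate (\ref{eq:rho_FT_decay2}) on $\Omega$ and the wavefront-set decay of $\hat v$ plus the uniform bound (\ref{eq:rho_FT_decay1}) on the complement. Your presentation is in fact slightly more careful than the paper's in tracking the $(2\pi)^{-n}$ factor and the sign in $\widehat{\chi\cdot\rho_A}(-\xi)$, and in explicitly flagging the Hörmander product identity as the one nontrivial step.
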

\begin{proof}
 As $v$ has compact support in $V_i$ let us choose a cutoff function 
 $\chi\in C_c^\infty(V_i)$ with $\chi=1$ in a neighborhood of $\supp v$. Then we calculate
 \begin{align*}
  (v\cdot\rho_A)[1]& =(v\cdot(\chi\rho_A))[1]=\widehat{v\cdot(\chi\cdot \rho_A)}(0)=(\widehat v \star \widehat{\chi\cdot\rho_A})(0)\\
                   & =\int_{\R^n} \widehat v(\xi) \widehat{\chi\cdot\rho_A}(\xi)d\xi
 \end{align*}
and note that the last integral converges as $\Gamma_1\cap\Gamma_2=\{0\}$.

Now let us fix a closed cone $\Omega\subset \R^n$ fulfilling $\Gamma_1\setminus\{0\} \subset\tu{Int}\Omega$
and $\Omega\cap\Gamma_2=\{0\}$. As $\WF(v)\subset \Gamma_1$ we get for any $N\in \mathbb N$
a constant $C_{N}$ such that 
\[
|\hat v(\xi)|\leq C_{N,v}\langle\xi\rangle^{-N} 
\]
uniformly for $\xi\in \R^n\setminus \Omega$. As any distribution is of finite 
order we in addition get the existence of an integer $K_v$ and a constant $C_{v,2}$
such that 
\[
|\hat v(\xi)|\leq C_{v,2}\langle\xi\rangle^{K_v} 
\]
uniformly for all $\xi\in \R^n$. Splitting the integral over $\xi$ we thus obtain
\[
 |(v\cdot\rho_A)[1]|\leq C_{N,v} \int_{\R^n\setminus \Omega}  \langle\xi\rangle^{-N}\left|\widehat{\chi\cdot\rho_A}(\xi)\right|d\xi
 +C_{v,2} \int_{\Omega}  \langle\xi\rangle^{K_v}\left|\widehat{\chi\cdot\rho_A}(\xi)\right|d\xi
\]
for any given $N\in \N$. Then we can use (\ref{eq:rho_FT_decay1}) in the first 
integral and (\ref{eq:rho_FT_decay2}) with $N=K_v + n + 1$ in the second 
integral which leads to the desired estimate (\ref{eq:rho_A_prod_estimate}).
\end{proof}
Next we study the local structure of $\supp u\subset M$ for a resonant state 
$u\in \ker_{H_{sG}}(\mathbf P-\lambda_0)$. Therefore note that $\supp u \subset M$
is a $\varphi_t$-invariant closed subset. This is the only property that we 
use in the following lemma.
\begin{lem}\label{lem:full_measure_tubes}
Let $\Sigma\subset M$ be a closed subset invariant under the topological 
transitive Anosov flow. If $\Sigma \neq M$, then there exists a finite cover of 
product neighborhoods $M=\bigcup_{i=1}^N R_i$ as well as for any $i=1,\ldots,N$
an open subset $N_i\subset X_i$ such that 
\begin{equation}
 \label{eq:full_measure_tubes}
 \tu{vol}_{X_i}(X_i\setminus N_i) =0 \tu { and } 
  h_i(N_i \times Y_i) \cap \Sigma = \emptyset. 
\end{equation}
where $\tu{vol}_{X_i}$ is the Lebsegue measure on $X_i$.
\end{lem}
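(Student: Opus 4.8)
\textbf{Proof proposal for Lemma~\ref{lem:full_measure_tubes}.}

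The plan is to exploit topological transitivity together with the SRB measure. Since $\Sigma \neq M$ and $\Sigma$ is closed and $\varphi_t$-invariant, its complement $M \setminus \Sigma$ is a nonempty open $\varphi_t$-invariant set. A dense orbit cannot meet $M\setminus\Sigma$ only for a bounded time interval (by invariance it would stay inside $\Sigma$ forever, contradicting density), so the dense orbit enters $M \setminus \Sigma$; more usefully, I would argue that $\Sigma$ has empty interior. Indeed, if $\Sigma$ contained a nonempty open set $U$, then by topological transitivity the forward orbit of the dense point would enter $U$, and by invariance of $\Sigma$ the whole dense orbit would lie in $\Sigma$; since $\Sigma$ is closed this forces $\Sigma = M$, a contradiction. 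Hence $\Sigma$ is a closed set with empty interior, so $M \setminus \Sigma$ is open and dense.

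Next I would invoke the SRB measure $\mu$, which exists and is unique by the cited theorem, and which is ergodic under $\varphi_t$. The key point is that $\Sigma$, being closed, invariant, and a proper subset, must have $\mu(\Sigma) = 0$: by ergodicity $\mu(\Sigma) \in \{0,1\}$, and if $\mu(\Sigma) = 1$ then $\mu$ would be supported on $\Sigma$, but the support of an SRB measure is all of $M$ (the SRB measure for a transitive Anosov flow has full support — this follows from transitivity and the local product structure of the conditional densities, or can be taken from the cited literature), contradicting that $\Sigma$ has empty interior. So $\mu(M \setminus \Sigma) = 1$ and $\mu(\Sigma) = 0$.

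Now fix any finite cover of product neighborhoods $M = \bigcup_{i=1}^N R_i$ with $R_i = \mathcal{PN}^{wu,s}_{r_i^{wu}, r_i^s}(m_i)$; such a cover exists by Theorem~\ref{thm:product_neighbour} and compactness of $M$. Working in a single chart, write $R_i = h_i(X_i \times Y_i)$ and recall from the SRB property that, restricted to $R_i$, the measure $\mu$ disintegrates along the weak unstable leaves $\mathcal U_y$, $y \in Y_i$, against a transverse measure $d\sigma(y)$ on $Y_i$, with strictly positive conditional densities $\tau_y$ on $\mathcal U_y$. Since $\Sigma \cap R_i$ is $\mu$-null, for $\sigma$-almost every $y \in Y_i$ the set $\Sigma \cap \mathcal U_y$ has zero $dm_{\mathcal U_y}$-measure, hence — because $\tau_y > 0$ — is $(\tau_y\, dm_{\mathcal U_y})$-null as well. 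Here I would use the local product structure: the product neighborhood is simultaneously foliated by the stable leaves $\mathcal S_x$, $x\in X_i$, and the unstable leaves $\mathcal U_y$, $y\in Y_i$, and the holonomy identifies each $\mathcal U_y$ with $X_i$ in a bi-measurable way with controlled Jacobians (absolute continuity of the foliations, as in Theorem~\ref{thm:anosov_fubini}). Transporting the previous statement through this identification and using Fubini on $X_i \times Y_i$, I would conclude that $\{x \in X_i : h_i(\{x\}\times Y_i) \cap \Sigma \neq \emptyset\}$ — equivalently the set of stable leaves meeting $\Sigma$ — has zero $\tu{vol}_{X_i}$-measure. Wait: one must be slightly careful, since a single stable leaf $\mathcal S_x$ meeting $\Sigma$ does not immediately translate into a full-measure statement about unstable leaves. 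The clean route is: $\mu(\Sigma\cap R_i)=0$ plus the disintegration along unstable leaves gives that $\Sigma$ meets $\tu{vol}_{\mathcal U_y}$-a.e. point of a.e. $\mathcal U_y$ in a null set; but I instead want the complement of $\Sigma$ to contain a full-measure union of \emph{entire} stable leaves. I would therefore set $N_i := \{ x \in X_i : \mathcal S_x \cap \Sigma = \emptyset \}$, which is open in $X_i$ because $\Sigma$ is closed and the leaves $\mathcal S_x$ vary continuously (Theorem~\ref{thm:cont_depend_inv_mflds}) and are compactly contained, and then show $\tu{vol}_{X_i}(X_i \setminus N_i) = 0$: a point $x$ lies in $X_i\setminus N_i$ iff the leaf $\mathcal S_x$ hits $\Sigma$, and since $\mathcal S_x$ is a single strong stable leaf and $\Sigma$ is \emph{backward} (in fact two-sided) invariant while $\varphi_t$ contracts $\mathcal S_x$, the set of $x$ with $\mathcal S_x\cap\Sigma\neq\emptyset$ is exactly $h_i^{-1}(\Sigma)$ projected to $X_i$ along the $Y_i$ factor, which the disintegration of $\mu$ along $\mathcal U_y$ — transported via holonomy to $X_i$ — shows to be $\tu{vol}_{X_i}$-null. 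Finally $h_i(N_i\times Y_i)$ is, by construction of $N_i$ and the leaf-decomposition $R_i=\bigcup_{x\in X_i}\mathcal S_x$, disjoint from $\Sigma$, giving \eqref{eq:full_measure_tubes}.

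The main obstacle I anticipate is precisely this last bookkeeping step: converting "$\mu(\Sigma)=0$" plus the unstable-leaf disintegration of the SRB measure into a statement about \emph{null measure in the weak-unstable direction} of the bad set $X_i\setminus N_i$, given that the foliations are only Hölder (so the product chart $h_i$ is merely a homeomorphism) and that $\Sigma$ a priori could meet many stable leaves. The resolution uses that the \emph{holonomy maps along strong stable leaves are absolutely continuous with Jacobians controlled as in Theorem~\ref{thm:anosov_fubini}}, so that the $\sigma$-a.e. "$\mathcal U_y$-null" property of $\Sigma\cap\mathcal U_y$ pushes to an $\tu{vol}_{X_i}$-null property after integrating out $Y_i$; invariance and contraction along $\mathcal S_x$ guarantee that "$\mathcal S_x$ meets $\Sigma$" is detected on every $\mathcal U_y$ simultaneously, upgrading "a.e. $y$" to the statement about $X_i$. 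I would spell this out using the Fubini-type formula \eqref{eq:anosov_fubini} in reverse, integrating first over $Y_i$ (the stable direction) and then over $X_i$.
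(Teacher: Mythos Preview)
Your overall strategy---use ergodicity of the SRB measure to get $\mu(\Sigma)=0$, then set $N_i:=\{x\in X_i:\mathcal S_x\cap\Sigma=\emptyset\}$---is natural, but there is a genuine gap at the key step. The set $X_i\setminus N_i$ is the projection of $\Sigma\cap R_i$ onto $X_i$ along the strong stable leaves. The SRB disintegration along weak unstable leaves gives that for $\sigma$-a.e.\ $y$ the slice $\{x:h_i(x,y)\in\Sigma\}$ is null in $X_i$ (after holonomy); but $X_i\setminus N_i=\bigcup_{y\in Y_i}\{x:h_i(x,y)\in\Sigma\}$ is an \emph{uncountable} union of such null sets, which need not be null. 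Your attempted rescue---``invariance and contraction along $\mathcal S_x$ guarantee that `$\mathcal S_x$ meets $\Sigma$' is detected on every $\mathcal U_y$ simultaneously''---is false: flow-invariance does \emph{not} make $\Sigma$ saturated by strong stable leaves, so $p\in\mathcal S_x\cap\Sigma$ says nothing about other points of $\mathcal S_x$. What you would actually need is that the local stable saturation of $\Sigma\cap R_i$ still has $\mu$-measure zero, and you give no argument for this. (A minor secondary issue: since $Y_i$ is an open ball, $\mathcal S_x$ is not compact, so your openness argument for $N_i$ also needs repair.)

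The paper avoids the projection difficulty by constructing $N_i$ dynamically. One fixes a small product neighborhood $U_0\subset M\setminus\Sigma$ and flows it backward: $\varphi_{-t}$ stretches $U_0$ along the strong stable direction, so that (with the $r_i^s$ chosen small enough and using a slightly larger $U_1$ to absorb the two boundary components) each $\varphi_{-t}(U_0)\cap R_i$ is contained in a union of \emph{complete} empty tubes $h_i(A_{i,t}\times Y_i)$. Setting $N_i:=\bigcup_{t>0}A_{i,t}$, one gets $\bigcup_{t>0}\varphi_{-t}(U_0)\cap R_i\subset h_i(N_i\times Y_i)\subset M\setminus\Sigma$, and ergodicity gives $\mu(h_i(N_i\times Y_i))=\mu(R_i)$. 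The companion Lemma~\ref{lem:neighbour_prod_measure} then converts this into $\tu{vol}_{X_i}(X_i\setminus N_i)=0$. The essential difference from your approach is that the paper only ever compares $\mu$ and $\tu{vol}_{X_i}$ on sets that are already full stable tubes, where the comparison is easy; your route requires controlling the measure of a stable saturation, which is a genuinely harder (and here unaddressed) problem.
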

\begin{proof}
 As $M\setminus \Sigma$ is nonempty and open, there exists $\varepsilon_s, \varepsilon_{wu} >0$ and $m_0\in M$
 such that $\mathcal{PN}_{\varepsilon_{wu},\varepsilon_s}^{wu,s}(m_0) \cap \Sigma = \emptyset$. 
 Let us introduce the open sets 		
 \[
  U_0 := \mathcal{PN}_{\varepsilon_{wu},\varepsilon_s/2}^{wu,s}(m_0) ,~~U_1 := \mathcal{PN}_{\varepsilon_{wu},\varepsilon_s}^{wu,s}(m_0)
 \]
Now take a cover of product neighborhoods $M=\bigcup_{i=1}^N R_i$ with $r_i^s < \varepsilon_s /4$.
We now want to consider the backwards iterates $\varphi_{-t}(U_0)$ for $t>0$. Firstly,
the flow invariance of $\Sigma$ assures, that $\varphi_{-t}(U_0)\cap \Sigma=\emptyset$. 
Secondly the dynamical properties of the flow assures that the sets $\varphi_{-t}(U_0)$
are stretched into the strong stable direction and thirdly, if $\mu$ denotes the SRB 
measure, then ergodicity implies that $\mu(\bigcup_{t>0} \varphi_{-t}(U_0)) = \mu(M)$.
We will subsequently use these three facts in order to construct the sets $N_i$.

Therefore we first consider for an arbitrary index $i$ and $t>0$ the connected 
components of $\varphi_{-t}(U_0)\cap R_i$. If a connected component can be
written in the form $h_i(A\times Y_i)$ with $A\subset X_i$ open, we call it a
complete empty tube. We call them ``empty'' as these sets  have no intersection with 
$\Sigma$ and we intend to build our set $N_i$ from the sets $A$. Note that for large
$t$ the stretching and folding will imply that $\varphi_{-t}(U_0)\cap R_i$
has many connected components. But as we started with a product neighborhood $U_0$ and as the 
strong stable foliation is preserved by the flow, there are maximally two 
connected components which are not complete empty tubes. They correspond to 
the two ends of the product neighborhood (c.f. Figure~\ref{fig:tubes}).
We can however make them complete by considering the connected components
$\varphi_{-t}(U_1)\cap R_i$. As $U_1\cap \Sigma=\emptyset$, these connected components
are also empty. Furthermore the fact that $r_i^s<\varepsilon/4$ together with the
fact, that the product neighborhoods are stretched along the strong stable 
direction for negative times implies the following statement: Any connected 
component of $\varphi_{-t}(U_1)\cap R_i$ that contains a connected component 
of $\varphi_{-t}(U_1)\cap R_i$ is a complete tube. Thus we can define an open set
$A_{i,t}\subset X_i$ such that the union of connected 
components of $\varphi_{-t}(U_1)\cap R_i$ that contain a connected component 
of $\varphi_{-t}(U_0)\cap R_i$ is given by $h_i(A_{i,t}\times Y_i)$ with this 
definition we obviousely have
\[
 \varphi_{-t}(U_0)\cap R_i \subset h_i(A_{i,t}\times Y_i)\subset \varphi_{-t}(U_1)\cap R_i \subset M\setminus \Sigma.
\]
We now can define the opens set $N_i:= \bigcup_{t>0} A_{i,t}\subset X_i$
and obtain
\[
 \bigcup_{t>0} \varphi_{-t}(U_0)\cap R_i \subset h_i(N_i\times Y_i)\subset M\setminus \Sigma.
\]
As $h_i^{wu}:X_i \to V_i^{wu}$ is a diffeomorphism, it only remains to prove that 
$\tu{vol}_{n_{wu}}(h_i^{wu}(N_i))=\tu{vol}_{n_{wu}}(V_i^{wu})$.
This, however, is an immediate consequence of 
\[
 \mu(h_i(N_i\times Y_i)) \geq \mu(\bigcup_{t>0} \varphi_{-t}(U_0)\cap R_i) = \mu(R_i)
\]
(which uses the ergodicity w.r.t. the SRB measure) as well as the following Lemma.
\begin{figure}
\centering
        \includegraphics[width=1\textwidth]{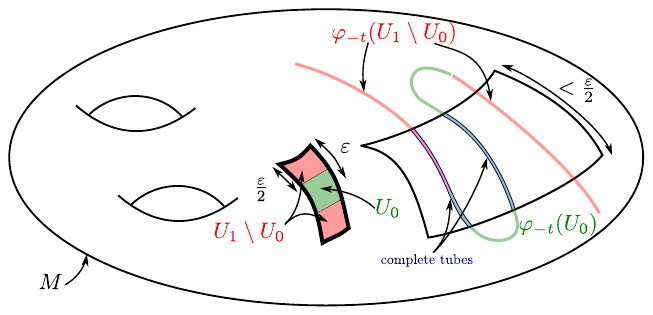}
\caption{Sketch visualizing the the choice of the complete strong stable tubes	}
\label{fig:tubes}
\end{figure}
\end{proof}
\begin{lem}\label{lem:neighbour_prod_measure}
 Let $\mu$ be the SRB measure, and $A \subset X_i$ with $\tu{vol}^{n_{wu}}(h_i^{wu}(A))>0$
 then $\mu(h_i(A\times Y_i)) > 0$.
\end{lem}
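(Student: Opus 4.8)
The plan is to read off the conclusion directly from the disintegration of the SRB measure along the strong stable foliation, using the product neighborhood structure and the positivity of the conditional densities. First I would pass to the chart $\kappa_i$, where the rectangle $R_i$ becomes $V_i$ and the set $h_i(A\times Y_i)$ becomes $S_{A'}$ with $A' := h_i^{wu}(A) \subset V_i^{wu}$; by hypothesis $\tu{vol}_{n_{wu}}(A') > 0$. By the definition of the SRB measure, restricted to the rectangle $R_i = \mathcal{PN}^{wu,s}_{r_i^{wu},r_i^s}(m_i)$ and written with respect to the foliation by strong stable leaves $\mathcal S_x$, there is a measure $d\sigma$ on $B^{s}_{r_i^s}(m_i)$ and positive measurable densities $\tau_y$ on the leaves $\mathcal U_y$ (the local \emph{weak unstable} leaves) such that $\mu|_{R_i} = \int_{B^s} (\tau_y\, dm_{\mathcal U_y})\, d\sigma(y)$. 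Here I must be slightly careful: the SRB definition in the excerpt is phrased in terms of the weak unstable leaves, whereas the sets $h_i(A\times Y_i)$ are unions of strong stable leaves. The cleanest route is to choose the product neighborhood (after shrinking $R_i$ if necessary) small enough that it is also a $(wu,s)$-product for the \emph{strong} unstable rather than weak unstable foliation in the sense needed, or — more robustly — to observe that $h_i(A\times Y_i) = \bigcup_{x\in A}\mathcal S_x$ is a union of local weak unstable leaves' pieces: indeed every point of $R_i$ lies on exactly one $\mathcal S_x$ and exactly one $\mathcal U_y$, and $h_i(A\times Y_i)$ is precisely $\{m : m\in\mathcal U_y \text{ for some } y, \text{ with its } \mathcal S\text{-coordinate in } A\}$, so it meets every leaf $\mathcal U_y$ in a set which is the image under the continuous trivialization of $A\subset X_i$.

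With this identification, I would compute
\[
\mu(h_i(A\times Y_i)) = \int_{B^s_{r_i^s}(m_i)} \Bigl( \int_{\mathcal U_y \cap h_i(A\times Y_i)} \tau_y(z)\, dm_{\mathcal U_y}(z) \Bigr) d\sigma(y).
\]
The inner integral is the $\tau_y$-weighted Lebesgue measure on the leaf $\mathcal U_y$ of the set corresponding to $A$. Since $\mathcal U_y = h_i^{wu\text{-type}}(\cdot)$ is a smooth $n_{wu}$-dimensional submanifold and the trivialization restricted to it is a homeomorphism onto $X_i$ which is (by Theorem~\ref{thm:cont_depend_inv_mflds}) a diffeomorphism in the appropriate smooth structure, the set $\mathcal U_y\cap h_i(A\times Y_i)$ has positive $dm_{\mathcal U_y}$-measure exactly when $h_i^{wu}(A)=A'$ has positive $n_{wu}$-dimensional Lebesgue measure — which is our hypothesis. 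As $\tau_y > 0$ everywhere, the inner integral is strictly positive for every $y$. Finally $d\sigma$ is not the zero measure (since $\mu(R_i)>0$: $M$ has full support for the SRB measure of a transitive Anosov flow, or at any rate every nonempty open set has positive SRB measure by ergodicity and invariance together with the fact that the forward orbit of any open set is dense), so integrating a strictly positive function against $d\sigma$ gives $\mu(h_i(A\times Y_i)) > 0$.

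The main obstacle, and the only nontrivial point, is the bookkeeping between the \emph{weak} unstable leaves appearing in the stated definition of the SRB measure and the \emph{strong} stable tubes $h_i(A\times Y_i)$ that we actually want to measure: one must make sure that the Fubini-type disintegration can be applied to these tubes, i.e. that $h_i(A\times Y_i)$ is measurable and meets $\mu$-a.e. leaf $\mathcal U_y$ in a set whose $(\tau_y\, dm_{\mathcal U_y})$-measure is positive. This is handled by the product neighborhood theorem (Theorem~\ref{thm:product_neighbour}), which gives that the two foliations are transverse and jointly trivialize $R_i$, so that "having $\mathcal S$-coordinate in $A$" is a condition that restricts to a positive-measure subset of each $\mathcal U_y$ precisely when $A'$ has positive measure; the smoothness of each individual leaf (Theorem~\ref{thm:cont_depend_inv_mflds}) then converts the Lebesgue-measure hypothesis on $A'\subset\R^{n_{wu}}$ into positivity of $dm_{\mathcal U_y}$-measure, the Hölder-only regularity of the foliation as a whole being irrelevant here. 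Everything else is the positivity of $\tau_y$ and of $\sigma$, both immediate from the definition of SRB and ergodicity.
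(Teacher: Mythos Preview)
Your overall strategy coincides with the paper's: disintegrate $\mu$ along the weak unstable leaves, observe that $h_i(A\times Y_i)\cap\mathcal U_y = h_i(A\times\{y\})$, and show that each such slice has positive $dm_{\mathcal U_y}$-measure. The problem lies in how you justify this last step.

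You assert that the trivialization restricted to $\mathcal U_y$ is ``(by Theorem~\ref{thm:cont_depend_inv_mflds}) a diffeomorphism in the appropriate smooth structure'', and therefore that positivity of $\tu{vol}_{n_{wu}}(A')$ automatically transfers to positivity of $m_{\mathcal U_y}(h_i(A\times\{y\}))$. But the map $x\in X_i\mapsto h_i(x,y)\in\mathcal U_y$ is precisely the strong stable holonomy from the transversal $X_i=B^{wu}_{r_i^{wu}}(m_i)=\mathcal U_{m_i}$ to the transversal $\mathcal U_y$, and for a general Anosov flow this holonomy is only H\"older, not smooth or even Lipschitz. Theorem~\ref{thm:cont_depend_inv_mflds} tells you that each leaf admits a smooth parametrization from a disk and that these parametrizations vary continuously in $C^\infty$; it says nothing about the regularity of the holonomy map between two distinct weak unstable leaves. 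A mere homeomorphism need not send sets of positive Lebesgue measure to sets of positive Lebesgue measure, so your inference fails. Your closing remark that ``the H\"older-only regularity of the foliation as a whole [is] irrelevant here'' is exactly backwards: this is where the low regularity bites.

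The paper closes this gap by treating $y=m_i$ and $y\neq m_i$ separately. For $y=m_i$ the map $h_i(\cdot,m_i)$ is the identity on $X_i$, so $m_{\mathcal U_0}(h_i(A\times\{m_i\}))>0$ follows directly from the hypothesis. For $y\neq m_i$ the paper invokes the \emph{absolute continuity of the strong stable holonomy maps} (a nontrivial classical result, see e.g.\ \cite[Section 8.6.2]{BP07}) to carry the positive-measure property from $\mathcal U_0$ to $\mathcal U_y$. That is the missing ingredient in your argument.
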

\begin{proof}
Using the fact that the SRB measure is absolutely continuous w.r.t. the weak
unstable foliation we can write 
\[
 \mu(h_i(A\times Y_i)) = \int_{B^s_{r_i^s(m)}}\underbrace{\left(\int_{h_i(A\times \{y\})} \tau_y\,dm_{\mathcal U_y}\right)}_{:=g(y)}d\sigma(y).
\]
Now, let us analyze the function $g(y)$. For $y=0$ we known from the definition 
of our charts, that $\mathcal U_0$ is diffeomorphic to $V_i^{wu}$. As $dm_{\mathcal U_0}$
comes from a Riemannian metric this induced measure is equivalent to Lebesgue 
measure on $V_i^{wu}$ and thus the assumption $\tu{vol}^{n_{wu}}(h_i^{wu}(A))>0$
implies $m_{\mathcal U_0}(h_i(A\times \{0\}))>0$. As $\rho_0$ is a positive density
this implies $g(0)>0$. Now let us consider an arbitrary other point $y\neq 0$. 
We want to show, that also the sets $h_i(A\times \{y\})\subset \mathcal U_y$ are 
of positive measure w.r.t. $m_{\mathcal U_y}$. This however is a direct consequence
of the fact that $h_i(A\times \{0\})$ has positive measure together with the fact,
that the strong stable holonomy maps are absolutely continuous (see e.g. 
\cite[Section 8.6.2]{BP07}). Again the positivity of the density $\rho_y$ implies
$g(y)>0$. $g$ is thus a strictly positive measurable function and consequently
$\mu(h_i(A\times Y_i)) = \int_{B^s_{r_i^s(m)}}g(y) d\sigma(y)>0$.
\end{proof}
We are now able to prove Theorem~\ref{thm:support}:
\begin{proof}
 Let $\lambda_0$ be a Pollicott-Ruelle resonance, 
 $\tilde u\in \ker_{H_{sG}}(\mathbf P-\lambda_0)^{J(\lambda_0)}\setminus\{0\}$ 
 an associated generalized resonant state and suppose that $\supp \tilde u \neq M$. 
 Let $k=0,\ldots, J(\lambda_0)-1$ be the unique integer, such that 
 \[
 u:=(\mathbf P-\lambda_0)^k\tilde u\in \ker_{H_{sG}}(\mathbf P-\lambda_0)\setminus\{0\}.
 \]
 As the differential operator $\mathbf P$ is local we have 
 $\supp u\subset \supp \tilde u\subsetneq M$. Furthermore because 
 $\varphi_{t}^*u = e^{-i\lambda_0} u$, the support $\supp u$ is invariant 
 by the Ansosov flow so we can apply Lemma~\ref{lem:full_measure_tubes} to 
 $\Sigma=\supp(u)$. Thus we obtain
 a cover $M=\bigcup_{i=1}^N R_i$ of product neighborhoods as well as open subsets 
 $N_i\subset X_i$ fullfilling
 \begin{equation}\label{eq:Ni_suppu}
  h_i(N_i\times Y_i)\cap \supp u = \emptyset.
 \end{equation}
 After a possible further refinement we can also 
 guarantee the existence of $C^\infty$-charts $\kappa_i:R_i\to V_i\subset \R^n$ as well as
 closed cones $\Gamma_1,\Gamma_2 \subset \R^n$ fulfilling (\ref{eq:un_stable_cones}).
 Now let us choose a smooth partition of unity 
 \[
 \sum_{i=1}^N \chi_i = 1\in C^\infty(M)\tu{ with }\chi_i\in C^\infty_c(R_i)
 \]
 and define $u_i:= (\kappa_i^{-1})^*(\chi_i\cdot u) \in \mathcal E'(V_i)$. Recall 
 from (\ref{eq:wavefront_cond}) that $u\in \ker_{H_{sG}}(\mathbf P-\lambda_0)$ implies 
 $\WF(u)\subset E^*_u$ and by (\ref{eq:un_stable_cones}) this implies that
 $\WF(u_i)\in \mathcal E'_{\Gamma_1}(V_i)$. Now, using Lemma~\ref{lem:rho_A_prod_estimate},
 we obtain for an arbitrary Borel set $A\subset V_i^{wu}$ the estimate 
 $|(u_i\cdot \rho_A)[1]|\leq C_{u_i}\tu{vol}_{n_{wu}}(A)$. We now want to use this 
 estimate in order to conclude, that $u_i = 0$ for all $i=1,\dots, N$. 

Now for any $\varepsilon>0$ let us introduce the set 
 \[
N_{i,\varepsilon}:=\{x\in N_i:~\{y\in \R^{n_{wu}}: |x-y|<\varepsilon\}\subset N_i\}  \subset V_i^{wu}.
 \]
As $N_i\subset V_i^{wu}$ are open, also the family $N_{i,\varepsilon}$ is a family of 
open sets, that increases monotonously and converges to $N_i$ for $\varepsilon\to 0$. Considering 
the complement $N_{i,\varepsilon}^c:=V_i^{wu}\setminus N_{i,\varepsilon}$ we obtain a 
monotonously decreasing family of closed sets converging to the nullset $V_i^{wu}\setminus N_i$.
Furthermore, from (\ref{eq:Ni_suppu}) we see that the distribution $\rho_{N^c_{i,\varepsilon}}$
defined in (\ref{eq:rho_A}) is equal 
to $1$ on an open neighborhood of $\supp(u_i)$. Taking an arbitrary 
$\psi\in C_c^\infty(V_i)$ and any $\varepsilon>0$ we can use this fact to calculate
\[
 u_i[\psi] = (u_i\cdot \psi)[1] = ((u_i\cdot\psi)\cdot \rho_{N^c_{i,\varepsilon}})[1].
\]
But as the multiplication by a smooth function doesn't increase the wavefront set 
we can apply Lemma~\ref{lem:rho_A_prod_estimate} to 
$(u_i\cdot \psi) \in \mathcal E'_{\Gamma_1}(V_i^{wu})$ and obtain for any $\varepsilon$
\[
 |u_i[\psi]| \leq C_{u_i,\psi} \tu{vol}_{n_{wu}}(N_{i,\varepsilon}^c).
\]
Finally, as the constants $C_{u_i,\psi}$ are independent of $\varepsilon$, the right
hand side converges to zero as $\varepsilon\to 0$, so we have shown $u_i[\psi]=0$. 
As $\psi\in C_c^\infty(V_i)$ was an arbitrary test function this implies $u_i=0$
for any $i$ and we conclude that $u=0$ which is a contradiction. This finishes 
the proof of Theorem~\ref{thm:support}.
\end{proof}
 \appendix
 \section{Smoothness of the conditional measures}\label{sec:appendix}
In the proof of the support property of the Pollicott-Ruelle resonant states,
the smoothness of the conditional measures (Theorem~\ref{thm:anosov_fubini})
plays a central role. While the result is well known to experts in dynamical system
theory it is difficult to find a precise statement or a proof in the existing 
literature. This is why I wanted to provide some more details in the arxiv-version 
of this article, on how to connect the smoothness of the conditional densities 
to the smoothness of holonomy maps, by standard dynamical systems arguments:

Let $V_i\subset \R^n$ be the range of a coordinate chart $\kappa_i$ as in 
Theorem~\ref{thm:anosov_fubini}. As we only work in one fixed chart, we drop
the index $i$ in order to simplify the notation and write $V:=V_i$. Recall that 
we have already defined $V^{wu} = \{x\in \R^{n_{wu}}: (x,0)\in V\}$ and analogousely 
we define $V^{s} :=\{y\in \R^{n_s}: (0,y)\in V\}$. Recall that the strong 
stable manifolds define a foliation of $V$ by the leaves $S_x$ which are parametrized
by $x\in V^{wu}$. Note that $S_0 = V\cap(\{0\}\times \R^{n_s})$ so it can be 
identified with $V^s$. Now for any $y\in V^s$
we define $T_y:= V\cap(\R^{n_{wu}}\times\{y\})$ which defines a smooth foliation
transversal to the foliation $S_x$. Recall that by the construction of the 
coordinate charts (\ref{eq:chart_properties}), $T_0$ coincides with a weak unstable leaf, but in general the
other leaves can not coincide with the weak unstable leaves unless 
the particular case, that the weak unstable foliation is smooth. Let us now reduce
the set $V$ such that it has a product structure w.r.t. the foliations $S_x$ and $T_y$,
i.e. such that for all $x\in V^{wu}$ and $y\in V^s$ one has $S_x\cap T_y =:\mathcal H(x,y)$
defines a unique point contained in $V$. Note that this poses not problem for
proving the theorem which claims the existences of the conditional measures in 
some product neighborhood of the strong stable and weak unstable foliation as such a set is 
always contained in the reduced set $V$. 

Now with this product structure of $S_x$ and $T_y$ we can define two holonomy maps
\[
 \mathcal H^S_y:\Abb{T_0=V^{wu}}{T_y}{x}{\mathcal H(x,y)}
\]
and 
\[
\mathcal H^T_x:\Abb{S_0=V^{s}}{S_x}{y}{\mathcal H(x,y)}. 
\]
Let us first discuss the second one, belonging to the smooth transversal foliation.
According to Theorem~\ref{thm:cont_depend_inv_mflds}, $\mathcal H^T_x$
is a $C^\infty$ diffeomorphism and depends continuously on $x$ in the 
$C^\infty$-topology. Note that the euclidean metric on $V$ induces metrics and thus measures 
on the submanifolds $S_x$ which we call $dm_{S_x}(s)$ and as $S_0$ is simply the 
$y$-axis $dm_{S_0}$ is simply the euclidean measure $dy$ on $V^s$ after identifying
$S_0$ and $V^s$. 
The fact that $\mathcal H^T_x$ are diffeomorphisms gives us a Jacobian function 
$J^T_x\in C(S_0)$ such that for a any function $\psi\in C_c(S_x)$
\[
 \int_{S_x} \psi(s) dm_{S_x}(s) = \int_{S_0} \psi\circ \mathcal H_x^T(y) J_x^T(y) dm_{S_0}(y).
\]
The theorem of change of variables gives us, the precise form of $J_x^T(y)$: 
For any $y\in S_0$ the differential of the holonomy maps is a linear
isomorphism
\[
 D\mathcal H_x^T: T_y S_0\subset \R^n \to T_y S_0\subset \R^n.
\]
Choosing two orthonormal basis of the $n_s$-dimensional subspaces 
$T_y S_0\subset \R^n$ and $T_y S_0\subset \R^n$ (w.r.t.~to the euclidean
metric on $\R^n$) the differential can be expressed as an $n_s\times n_s$
matrix and the Jacobian is simply the absolute value of the determinant. Now the fact, 
that $\mathcal H_x^T$ is $C^\infty$ and depends continuously on $x$ translates to
the fact that $J^T_x\in C^\infty(S_0)$ and depends continuously on $x$ w.r.t.
the $C^\infty$-topology. 

Let us now turn to the holonomy maps of the strong stable foliation. Here the 
situation is infinitely more complicated, as for a general Anosov flow the maps 
$\mathcal H^S_y$ are only Hölder continuous homeomorphisms. However it is known 
that they are absolutely continuous and that they possess a Jacobian $J_y^S$ such 
that for any function $\psi\in C_c(T_y)$
\[
 \int_{T_y} \psi(x) d\tu{Leb}_{T_y}(x) = \int_{T_0} \psi\circ \mathcal H_y^S(x) J_y^S(x) d\tu{Leb}_{T_0}(x).
\]
The existence of such Jacobians for holonomies in dynamical systems can be found in 
various textbooks (see e.g. \cite[Section 6.2]{BS02}. It is however more difficult
to find sharp statements on their regularity in $x$ and $y$. The following
sufficiently strong statement for our purpose can for example be found in 
\cite[Appendix E]{GLP13}
\begin{prop}
For any $x$, the map $y\in S_0 \mapsto J_y^S(x)\in \R_{>0}$
is in $C^\infty(S_0)$ and the map $x\in V^{wu}\mapsto J_\bullet^S(x)\in C^\infty(S_0)$ 
is continuous \footnote{Note that in \cite[Appendix E]{GLP13} they state the result 
for the strong unstable foliation. Furthermore they give even more precise information
on the Hölder regularity of the map $x\mapsto J_\bullet^S(x)$
which is not relevant for our purpose.}
\end{prop}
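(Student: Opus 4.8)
The plan is to realize $J^S_y(x)$ as an explicit convergent infinite product of Jacobians of the time-one map of $\varphi_t$ along the forward orbits of the two endpoints of the holonomy segment, and then to deduce the $C^\infty$-regularity in $y$ together with the continuous dependence on $x$ from the uniform hyperbolicity estimates and the continuous dependence of the invariant foliations on the base point (Theorem~\ref{thm:cont_depend_inv_mflds}). Since the absolute continuity of the strong stable holonomies and the existence of $J^S_y\in L^1_{\mathrm{loc}}(T_0)$ are already available, it suffices to identify $J^S_y$ almost everywhere with such a product and to establish the regularity of the product.

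Next I would establish the telescoping identity: fixing $x\in V^{wu}$, put $p:=\mathcal H(x,0)\in T_0$ and $q_y:=\mathcal H(x,y)=\mathcal H^T_x(y)\in T_y$, so that $p$ and $q_y$ lie on the common strong stable leaf $S_x$ and $d(\varphi_t(p),\varphi_t(q_y))\le Ce^{-\theta t}d(p,q_y)$ for $t\ge 0$. Write $\varphi_n$ for the iterates of the time-one map and $h^{(n)}_y$ for the strong stable holonomy between the forward iterates $\varphi_n(T_0)$ and $\varphi_n(T_y)$; since $\varphi_t$ preserves the strong stable foliation one has $\mathcal H^S_y=\varphi_{-n}|_{\varphi_n(T_y)}\circ h^{(n)}_y\circ\varphi_n|_{T_0}$. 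Because forward iteration contracts $E_s$, the cone estimate shows that $\varphi_n(T_0)$ and $\varphi_n(T_y)$ flatten onto the distribution $E_0\oplus E_u$ at an exponential rate while staying in a $C^\infty$-bounded family of graphs (graph transform, using that $M$ is compact); together with $d(\varphi_n(p),\varphi_n(q_y))\to 0$ exponentially this forces $\tu{Jac}(h^{(n)}_y)(\varphi_n(p))\to 1$ exponentially, which is the mechanism behind the classical absolute continuity of the stable foliation. Taking Jacobians with respect to the induced Riemannian measures and telescoping with time-one steps then gives, up to an explicit $C^\infty$ positive factor from the chart comparing the Euclidean and Riemannian measures on $T_0$ and $T_y$,
\[
 J^S_y(x)=\prod_{n=0}^{\infty}\frac{\tu{Jac}\big(\varphi_1|_{\varphi_n(T_0)}\big)\big(\varphi_n(p)\big)}{\tu{Jac}\big(\varphi_1|_{\varphi_n(T_y)}\big)\big(\varphi_n(q_y)\big)}.
\]
Writing $a_n(x,y)$ for the logarithm of the $n$-th factor one obtains $\log J^S_y(x)=\sum_{n\ge 0}a_n(x,y)$ with $\sup_{x,y}|a_n(x,y)|=O(e^{-\beta n})$ for some $\beta>0$, which already reproves absolute continuity and identifies the Jacobian.

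It remains to analyze the $y$- and $x$-dependence of this series. For fixed $x$ the $y$-dependence of $a_n$ enters only through the point $q_y=\mathcal H^T_x(y)$ and the transversal $T_y$; since $y\mapsto\mathcal H^T_x(y)$ is a $C^\infty$ diffeomorphism onto the smooth leaf $S_x$ and $T_y$ is the smooth transversal foliation, each $a_n(x,\cdot)$ is a smooth function on $V^s$. The content of the proposition is then that $\|a_n(x,\cdot)\|_{C^k(V^s)}=O(e^{-\beta_k n})$ for every $k$, uniformly in $x$: granted this, $\sum_n a_n(x,\cdot)$ converges in $C^\infty(V^s)=C^\infty(S_0)$, so $y\mapsto J^S_y(x)\in C^\infty(S_0)$; and since by Theorem~\ref{thm:cont_depend_inv_mflds} the leaf $S_x$, the parametrization $\mathcal H^T_x$, the point $p$ and the iterates $\varphi_n(p),\varphi_n(q_y)$ all depend continuously on $x$ in the $C^\infty$-topology, the series converges in $C^\infty(S_0)$ uniformly in $x$, whence $x\mapsto J^S_\bullet(x)=\exp\big(\sum_n a_n(x,\cdot)\big)\in C^\infty(S_0)$ is continuous.

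The main obstacle is precisely the bounded-distortion estimate $\|a_n(x,\cdot)\|_{C^k(V^s)}=O(e^{-\beta_k n})$. What makes it plausible is that differentiating $q_y$ in $y$ moves the holonomy endpoint within the strong stable leaf $S_x$, so after $n$ forward iterations the relevant base point has travelled only $O(e^{-\theta n})$; what makes it delicate is that one must control the graph-transform convergence of the iterated transversals $\varphi_n(T_y)$ onto $E_0\oplus E_u$ in a $C^k$-in-$y$ sense, against the exponential growth of the higher derivatives of $\varphi_n$ and the mere Hölder regularity of the distribution $E_0\oplus E_u$. Carrying this out uses the $C^\infty$-smoothness of $\varphi_t$, the uniform hyperbolicity, and the $C^\infty$-smoothness of the transversal holonomies $\mathcal H^T_x$; it is exactly the analysis of \cite[Appendix~E]{GLP13}, which is why the statement is quoted here rather than reproved.
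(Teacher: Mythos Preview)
The paper does not actually prove this proposition: it is stated in the appendix as a quotation from \cite[Appendix~E]{GLP13}, with no argument given beyond the citation. So there is nothing to compare your proposal against in the paper itself.

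That said, your sketch is the standard route and matches what \cite{GLP13} does: express the stable holonomy Jacobian as a telescoping infinite product of one-step Jacobians along forward orbits, use exponential contraction along $E_s$ to get convergence, and then upgrade to $C^k$-convergence via bounded distortion estimates. You correctly isolate the only nontrivial step, namely the uniform-in-$x$ bound $\|a_n(x,\cdot)\|_{C^k}=O(e^{-\beta_k n})$, and you are honest that carrying this out is precisely the content of \cite[Appendix~E]{GLP13}. So your proposal is not so much an alternative proof as an accurate summary of the cited one, which is appropriate given that the paper itself declines to reprove it.
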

Let us now combine all the ingredients in order to calculate the conditional 
densities. Given a function $\psi\in C_c(V)$ we calculate
\begin{eqnarray*}
 \int_V \psi(x,y) dx\,dy&=& \int_{V^s}\left[\int_{T_y} f(x,y) d\tu{Leb}_{T_y}(x)\right]dy\\
 &=&\int_{V^s}\left[ \int_{T_0} \psi\circ \mathcal H_y^S(x) J_y^S(x) d\tu{Leb}_{T_0}(x)\right]dy\\
 &=&\int_{T_0}\left[\int_{S_0} \psi(\mathcal H(x,y)) J_y^S(x) dm_{S_0}(y)\right]d\tu{Leb}_{T_0}(x)\\
  &=&\int_{T_0}\left[\int_{S_x} \psi(s) \frac{J^S_{(\mathcal H^T_x)^{-1}(s)}(x)}{J^T_x((\mathcal H^T_x)^{-1}(s))} dm_{S_x}(s)\right]d\tu{Leb}_{T_0}(x).
\end{eqnarray*}
We have thus shown that the conditional densities take the form 
\[
 \delta_x(s) =\frac{J^S_{(\mathcal H^T_x)^{-1}(s)}(x)}{J^T_x((\mathcal H^T_x)^{-1}(s))}.
\]
Collecting the statements on the regularity of the Jacobians and holonomy maps we
conclude, that for any $x$ they are smooth and that the map
\[
 x\mapsto \delta_x(H_x^T(\bullet)) \in C^\infty(S_0)
\]
depends continuously on $x$ w.r.t. the $C^\infty$ topology. This statement 
assures, the uniform bounds on $\|\delta_x\|_k$.

\begin{rem}
 In the case of $C^\infty$-foliations we could take coordinate charts $\kappa_i$ such that
 $S_x = (\{x\}\times \R^{n_s})\cap V_i$. Then (\ref{eq:anosov_fubini}) would reduce to the ordinary 
 Fubini theorem. For a general foliation such generalizations of Fubini theorems are a nontrivial
 task and there are counterexamples of foliations that are not absolutely continuous (see e.g. 
 \cite[Section 8.6]{BP07}).
\end{rem}
\begin{rem}
For Anosov maps, an analogouse statement to Theorem~\ref{thm:anosov_fubini} can be 
found in \cite[Remark 3.4, p.377]{Che02}.
\end{rem}


\providecommand{\bysame}{\leavevmode\hbox to3em{\hrulefill}\thinspace}
\providecommand{\MR}{\relax\ifhmode\unskip\space\fi MR }
\providecommand{\MRhref}[2]{%
  \href{http://www.ams.org/mathscinet-getitem?mr=#1}{#2}
}
\providecommand{\href}[2]{#2}

\end{document}